\documentclass[11pt]{amsart}

\usepackage[T1]{fontenc}
\usepackage{lmodern}
\usepackage{amsmath,amssymb,amsthm,mathtools}
\usepackage[margin=1in]{geometry}
\usepackage{microtype}
\usepackage{booktabs,array}
\usepackage{tikz}
\usepackage{xcolor}
\usepackage[colorlinks=true,linkcolor=blue!45!black,citecolor=blue!45!black,
  urlcolor=blue!45!black]{hyperref}
\allowdisplaybreaks

\hypersetup{
  pdftitle={The Five-Point Case of Mordell's Discriminant Inequality},
  pdfauthor={Jie Wang}
}

\newtheorem{theorem}{Theorem}[section]
\newtheorem{proposition}[theorem]{Proposition}
\newtheorem{lemma}[theorem]{Lemma}
\newtheorem{corollary}[theorem]{Corollary}
\theoremstyle{definition}

\theoremstyle{remark}
\newtheorem{remark}[theorem]{Remark}

\numberwithin{equation}{section}

\newcommand{\C}{\mathbb C}
\newcommand{\R}{\mathbb R}
\newcommand{\Q}{\mathbb Q}
\newcommand{\ii}{\mathrm i}

\DeclareMathOperator{\Disc}{Disc}
\DeclareMathOperator{\Res}{Res}

\title[The five-point discriminant inequality]
  {The Five-Point Case of Mordell's Discriminant Inequality}
\author{Jie Wang}
\address{State Key Laboratory of Mathematical Sciences, Academy of Mathematics and Systems Science, Chinese Academy of Sciences, Beijing, China}
\email{wangjie212@amss.ac.cn}
\date{}
\subjclass[2020]{Primary 26D15; Secondary 13P10, 30C10, 76B47}
\keywords{Discriminant inequality, Vandermonde product, Gr\"obner basis, weighted Fekete points, Ginibre ensemble, point vortices}

\begin{document}

\begin{abstract}
Let $z_1,\ldots,z_5\in\C$ satisfy $\sum_{j=1}^5|z_j|^2=5$.
We prove the sharp inequality
\[
 \prod_{1\le i<j\le5}|z_i-z_j|^2\le 5^5,
\]
with equality precisely for a regular pentagon on the unit circle, up to rotation and permutation.
This settles the remaining five-point case of the discriminant problem considered by Mordell in 1960.
The proof first identifies a maximizer with a normalized relative equilibrium of five identical planar point vortices.  We then give an independent exact classification of the corresponding stationary equations.  The main algebraic step proves that every collision-free physical solution of the normalized stationary equations is reflection symmetric; it is obtained from a resultant construction and exact Gr\"obner-basis elimination over $\Q$.  The reflection-symmetric equations have exactly five physical shapes, and an exact discriminant comparison selects the regular pentagon.  All symbolic eliminations, Sturm counts, and interval inequalities are reproduced by an accompanying verification script using rational arithmetic.  As consequences, the same configuration gives the Gaussian weighted Fekete set and the global mode of the five-particle complex Ginibre density after the natural rescaling.
\end{abstract}

\maketitle

\section{Introduction}\label{sec:intro}

The purpose of this paper is to settle the five-point case of a classical discriminant extremal problem.  For an integer $n\ge2$ and $z=(z_1,\ldots,z_n)\in\C^n$, put
\begin{equation}\label{eq:defDelta}
 \Delta_n(z)=\prod_{1\le i<j\le n}|z_i-z_j|^2,
 \qquad
 Q(z)=\sum_{i=1}^n|z_i|^2,
 \qquad
 M_n=\max_{Q(z)=n}\Delta_n(z).
\end{equation}
If $p(X)=\prod_{j=1}^n(X-z_j)$, then
$\Delta_n(z)=|\Disc(p)|$.  Thus $M_n$ is the largest absolute discriminant of a monic polynomial whose roots have prescribed quadratic moment.  The $n$th roots of unity give $M_n\ge n^n$.

Birch conjectured that equality always holds.  Mordell~\cite{Mordell1960} proved the conjecture for $n=3$ and also proved it under the additional hypothesis $|z_1|=\cdots=|z_n|=1$.  The problem is closely related to inequalities of Mulholland~\cite{Mulholland1958}.  Chalk~\cite{Chalk1960} gave the centered regular $(n-1)$-gon as a counterexample for every $n\ge6$.  In the normalization $Q(z)=n$, that configuration has value
\begin{equation}\label{eq:chalkvalue}
 (n-1)^{n-1}\left(\frac{n}{n-1}\right)^{n(n-1)/2},
\end{equation}
which exceeds $n^n$ from $n=6$ onward.  Lin~\cite{Lin2011} subsequently proved the four-point case.  For completeness, Appendix~\ref{app:n4} gives a short self-contained proof of that case based on a Hadamard change of variables.  The five-point problem was therefore the only unresolved case; it was also singled out in the complex polynomial optimization study of Wang and Magron~\cite{WangMagron2025}.

Our main result is the following.

\begin{theorem}\label{thm:main}
Let $z_1,\ldots,z_5\in\C$ satisfy
\[
 \sum_{j=1}^5|z_j|^2=5.
\]
Then
\begin{equation}\label{eq:mainineq}
 \prod_{1\le i<j\le5}|z_i-z_j|^2\le5^5.
\end{equation}
Equality holds if and only if, after a permutation,
\begin{equation}\label{eq:equalitycase}
 z_j=e^{\ii\theta}\omega^{j-1},
 \qquad
 \omega=e^{2\pi\ii/5},
 \qquad j=1,\ldots,5,
\end{equation}
for some $\theta\in\R$.
\end{theorem}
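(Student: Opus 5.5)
The plan follows the route announced in the abstract: reduce to a finite algebraic classification of critical configurations, then compare the values of $\Delta_5$ on those finitely many shapes.

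\textbf{Step 1 (existence and centering).} The sphere $Q(z)=5$ is compact and $\Delta_5$ is continuous, so a maximizer $z$ exists. It has $\Delta_5(z)\ge 5^5>0$, so the points are pairwise distinct. Translation leaves $\Delta_5$ unchanged and, if $c$ is the centroid, replaces $Q$ by $Q-5|c|^2$. Rescaling back to $Q=5$ then multiplies $\Delta_5$ by a factor $>1$ whenever $c\neq0$. Hence any maximizer has $\sum z_j=0$.

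\textbf{Step 2 (Lagrange condition).} Write $\log\Delta_5=\sum_{i<j}\log|z_i-z_j|^2$ and take the Wirtinger derivative in $\bar z_k$. The Lagrange condition with multiplier $\lambda$ reads
\[
 \sum_{j\ne k}\frac{1}{\bar z_k-\bar z_j}=\lambda z_k,\qquad k=1,\dots,5.
\]
Multiply by $\bar z_k$ and sum over $k$. The pairs on the left combine to give $\binom52=10$, while the right side is $5\lambda$. Hence $\lambda=2$.

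This is exactly the stationary equation for a relative equilibrium of five identical point vortices, normalized by $Q=5$. We conjugate and clear denominators to obtain a polynomial system in $z_k,\bar z_k$. To make it algebraic over $\Q$, we treat $w_k=\bar z_k$ as independent variables and later impose the physical reality condition $w_k=\overline{z_k}$. Rotation invariance is removed by fixing one normalization, for example $z_1$ real and positive.

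\textbf{Step 3 (classification; the main obstacle).} The goal is to show that every collision-free physical solution is reflection symmetric about a line through the origin. After relabeling, such a configuration consists of one point on the axis together with two conjugate pairs, or something similar.

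For this step I would first form the antisymmetric combinations measuring the failure of symmetry under $z\mapsto\bar z$. I would then use resultants to eliminate the auxiliary variables, and compute a Gröbner basis over $\Q$ of the ideal obtained after saturating by the collision factors $\prod(z_i-z_j)$. The hope is that the nonsymmetric component either is empty or contains no points satisfying the reality condition. Excluding such points might need Sturm sequences on a univariate elimination polynomial, or interval arithmetic.

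The real risk is computational size. The system has ten real unknowns, and the known vortex literature (Albouy–Kaloshin style finiteness) suggests the elimination is heavy. To reduce degrees before eliminating, I would quotient by the symmetric group using orderings and exploit $\sum z_j=0$.

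\textbf{Step 4 (symmetric case and comparison).} Under reflection symmetry the system reduces to a few real unknowns, namely an axis point $x$ and two pairs $a\pm\ii b$, $c\pm\ii d$. Elimination should give a univariate polynomial whose Sturm count yields finitely many physical shapes. I expect five: the regular pentagon, the centered square, and a few collinear or isosceles ones.

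For each shape we rescale to $Q=5$ and evaluate $\Delta_5$ exactly, using algebraic numbers isolated by rational intervals. For instance, the centered square gives $4^4(5/4)^{10}\approx2441<3125$, consistent with \eqref{eq:chalkvalue}. The comparison should show that $5^5$ is attained only by the pentagon.

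Since every maximizer is a critical point, this proves \eqref{eq:mainineq}. The equality case follows because any maximizer must be this critical shape, which is determined up to rotation and permutation, giving \eqref{eq:equalitycase}.
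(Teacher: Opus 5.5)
Your Steps 1 and 2 are correct. Getting $\sum_j z_j=0$ by translating and rescaling is a fine substitute for summing the stationary equations, and your overall architecture matches the paper's.

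The genuine gap is Step 3. It carries essentially all of the difficulty, and you offer it only as a hope. As formulated it is not well posed, for three reasons:
\begin{itemize}
\item The reflection axis and the permutation matching points to their mirror images are unknown, so there is no fixed set of ``antisymmetric combinations'' to eliminate.
\item After complexification, rotations become the action $(z,w)\mapsto(tz,t^{-1}w)$ with $t\in\C^\times$, so ``$z_1$ real and positive'' is not an algebraic normalization.
\item The condition $w_k=\overline{z_k}$ is not algebraic over $\C$, so no saturated Gr\"obner basis of the complexified system can by itself exclude physical points on a non-symmetric component.
\end{itemize}
The missing idea is a way to turn physicality into an algebraic identity among rotation- and permutation-invariants.

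The paper does this as follows. It passes to the root polynomial $p(X)=X^5+aX^3+bX^2+cX+d$, which removes $S_5$. It recovers the polynomial $p_y$ of the $w_k$ from $\Res_X\bigl(p,(Y+TX)p'-p''\bigr)$. It then reduces the other half of the system to the three mixed-moment equations $s_2=0$, $s_1=6r_3$, $s_0=2r_2$. Saturated elimination in the chart $a=1$, $b\ne0$ shows that $c$ and $d/b$ are polynomials over $\Q$ in $B=b^2$, and that $\delta r_2^2=b^2r_3^3$, i.e.\ $b_y^2/a_y^3=b^2$. Physicality forces $b_y^2/a_y^3=\overline b^{\,2}$. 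Hence $B\in\R$, and all coefficients of $p$ are real after at most a quarter-turn. The charts $a=0$ or $b=0$ are small separate eliminations. Without this, or a fully executed and certified enumeration of all complex solutions, Step 3 is not an argument.

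Step 4 is also incomplete. A reflection-symmetric five-point set has $1$, $3$ or $5$ points on the axis, and your parametrization (one axis point plus pairs $a\pm\ii b$, $c\pm\ii d$ with nonzero imaginary parts) covers only the first. The three-real-point case must be treated separately; in the paper it yields nothing new, since on its $u\ne0$ branch the would-be pair has $v<0$. The collinear case comes from $p''-4Xp'+20p=0$.

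Your numerical value for the centered square is also off: $4^4(5/4)^{10}=5^{10}/2^{12}\approx2384.19$, not $2441$. Finally, the comparison must include the two nonclassical shapes coming from $10H^4-65H^3+140H^2-97H+6=0$. Their values $\Delta_5\approx2347.8$ and $\approx801.6$ have to be certified by exact interval bounds, since the first lies quite close to the square's value.
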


The main issue is not to verify that the regular pentagon is stationary, but to exclude stationary configurations with unequal radii.  The Euler--Lagrange equations are the familiar relative-equilibrium equations for identical point vortices.  Exact classifications of the point-vortex equations by computational algebra are known; in particular, Faug\`ere and Svartz~\cite{FaugereSvartz2012} computed all equilibria for small numbers of identical vortices.  Their method reduces the $N=5$ polynomial system from $2060$ complex solutions to a symmetric system with $17$ solutions.  Related five-vortex configurations and their stability are discussed by Dirksen and Aref~\cite{DirksenAref2011}; see also Aref's survey~\cite{Aref2007}.

The proof here is independent of that classification.  Its key step is a reflection-symmetry theorem for the five-point stationary equations.  We complexify the equations, encode one half of the system in the root polynomial
\[
 p(X)=X^5+aX^3+bX^2+cX+d,
\]
and recover the other half from a resultant.  Three mixed Newton identities give three polynomial equations in $a,b,c,d$.  An exact elimination in the generic chart $ab\ne0$ produces two quartic branches in the invariant $B=b^2$ and, more importantly, the identity
\[
 \delta r_2^2=b^2r_3^3,
\]
where $\delta=\Disc(p)$ and $r_3,r_2$ are coefficients of the resultant polynomial.  The physical conjugacy condition then forces $B$ to be real, and the remaining coefficients become real after at most a quarter-turn.  The exceptional charts are elementary exact eliminations.  It follows that every collision-free physical solution of the normalized stationary equations is reflection symmetric.

Once reflection symmetry is known, the classification becomes small.  There are three possibilities: one real point and two conjugate pairs, three real points and one conjugate pair, or five real points.  The first case leads to the quartic
\begin{equation}\label{eq:introquartic}
 10H^4-65H^3+140H^2-97H+6=0,
\end{equation}
whose two real roots give the two nonclassical five-vortex configurations.  The second case adds no new solution, and the third gives a single collinear configuration.  Altogether there are exactly five physical shapes.  Their discriminants can then be compared exactly; only the regular pentagon reaches $5^5$.

The algebraic computations in the proof are computer-assisted but exact.  They are performed over $\Q$; no floating-point calculation is used to establish an identity, a root count, or an inequality.  The accompanying script \texttt{n5\_complete\_verification.py}\footnote{The script can be downloaded at \url{https://wangjie212.github.io/jiewang/codes/n5_complete_verification.py} and uses SymPy~\cite{Meurer2017}.} reconstructs every polynomial system from the definitions, computes the saturated Gr\"obner bases, checks all normal forms, applies Sturm root counting, and verifies the rational interval bounds. Decimal coordinates appearing below are only illustrations.

The paper is organized as follows.  Section~\ref{sec:stationary} reduces the extremal problem to the normalized point-vortex equations.  Section~\ref{sec:poly} develops the polynomial and resultant encoding.  Section~\ref{sec:reflection} proves reflection symmetry.  Section~\ref{sec:classification} classifies all reflection-symmetric equilibria.  Section~\ref{sec:discriminants} evaluates their discriminants and proves Theorem~\ref{thm:main}.  Section~\ref{sec:connections} records consequences for weighted Fekete points, the complex Ginibre ensemble, and point-vortex relative equilibria.  The first two appendices collect the exact elimination data used in the computer-assisted steps, while Appendix~\ref{app:n4} gives an elementary proof of the four-point case.

\section{From the extremal problem to stationary equations}\label{sec:stationary}

We begin with the variational reduction.  Although we only need $n=5$, it is useful to keep the first observation in general form.

\begin{lemma}\label{lem:maxexists}
The maximum $M_n$ in \eqref{eq:defDelta} exists.  Every maximizer with positive value is collision-free.
\end{lemma}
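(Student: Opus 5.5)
The existence of the maximum is a compactness statement. The constraint set $S=\{z\in\C^n: Q(z)=n\}$ is the sphere of radius $\sqrt n$ in $\C^n\cong\R^{2n}$, hence compact. The function $\Delta_n$ is a polynomial in the real and imaginary parts of the $z_j$, hence continuous. By the extreme value theorem, $\Delta_n$ attains its supremum on $S$, so $M_n$ is a maximum.

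To see that this maximum is positive, exhibit a single admissible point with $\Delta_n>0$. The $n$th roots of unity satisfy $Q=n$ and are pairwise distinct. They therefore give $\Delta_n=n^n>0$, as already noted in the introduction. Consequently $M_n\ge n^n>0$, and every maximizer has positive value.

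The collision-free assertion is then immediate. If a point $z\in S$ had $z_i=z_j$ for some $i<j$, the factor $|z_i-z_j|^2$ would vanish, forcing $\Delta_n(z)=0$. Hence any point with $\Delta_n(z)>0$, in particular any maximizer, has pairwise distinct coordinates.

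There is no real obstacle here. The only point needing care is to use that the constraint is an equality on a compact sphere, not merely a bound. One could also remark that $\Delta_n$ is homogeneous of degree $n(n-1)$, so maximizing on $Q\le n$ gives the same value, but this is not needed for the lemma.
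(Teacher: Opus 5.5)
Your proposal is correct and matches the paper's proof: compactness of the sphere $Q(z)=n$ plus continuity of $\Delta_n$ gives existence, and the regular $n$-gon (the $n$th roots of unity) shows the maximum is positive, so no maximizer can have a collision. Your explicit value $\Delta_n=n^n$ for the roots of unity is a harmless refinement of the paper's ``positive product'' remark.
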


\begin{proof}
The set $\{z\in\C^n:Q(z)=n\}$ is compact and $\Delta_n$ is continuous, so a maximum exists.  A regular $n$-gon has positive product.  Hence a maximizer cannot contain two equal points.
\end{proof}

For five points set
\[
 L(z)=\log\Delta_5(z)
 =\sum_{1\le i<j\le5}\log|z_i-z_j|^2.
\]
At a collision-free maximizer, Lagrange multipliers may be applied to $L$ subject to $Q(z)=5$.

\begin{proposition}\label{prop:equilibrium}
Every maximizer in Theorem~\ref{thm:main} satisfies
\begin{equation}\label{eq:equilibrium}
 \sum_{j\ne i}\frac1{z_i-z_j}=2\overline{z_i},
 \qquad i=1,\ldots,5.
\end{equation}
In particular,
\begin{equation}\label{eq:centroid}
 \sum_{i=1}^5z_i=0.
\end{equation}
\end{proposition}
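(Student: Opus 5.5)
Let $z$ be a maximizer. By Lemma~\ref{lem:maxexists} it exists and is collision-free, since $M_5\ge5^5>0$. Hence $L=\log\Delta_5$ is smooth near $z$. The constraint $Q(z)=5$ has nonvanishing gradient on the sphere, so the Lagrange multiplier rule gives a real $\lambda$ with $\nabla L(z)=\lambda\nabla Q(z)$, where both sides are real gradients on $\C^5\cong\R^{10}$.

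To extract \eqref{eq:equilibrium}, I will use Wirtinger derivatives, with the convention that for real $f$ the real gradient in $z_i$ is identified with $2\,\partial f/\partial\overline{z_i}$. Since $\log|z_i-z_j|^2=\log(z_i-z_j)+\log(\overline{z_i}-\overline{z_j})$ locally, we get
\[
\frac{\partial L}{\partial\overline{z_i}}=\sum_{j\ne i}\frac1{\overline{z_i}-\overline{z_j}}.
\]
Also $\partial Q/\partial\overline{z_i}=z_i$. The multiplier condition therefore reads $\sum_{j\ne i}1/(\overline{z_i}-\overline{z_j})=\lambda z_i$, and conjugating gives
\[
\sum_{j\ne i}\frac1{z_i-z_j}=\lambda\overline{z_i}.
\]

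The one nontrivial point is to identify $\lambda=2$. I will multiply the $i$th equation by $z_i$ and sum over $i$. On the left, pairing the $(i,j)$ and $(j,i)$ terms gives
\[
\frac{z_i}{z_i-z_j}+\frac{z_j}{z_j-z_i}=1,
\]
so the left side totals $\binom52=10$. The right side totals $\lambda Q(z)=5\lambda$. Hence $\lambda=2$. This is equivalently the Euler identity for $L$, which is homogeneous of log-degree $n(n-1)$ under scaling: at a maximizer on the sphere, the radial derivative must match the multiplier.

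Finally, summing \eqref{eq:equilibrium} over $i$ makes the left side vanish by antisymmetry of $1/(z_i-z_j)$. This gives $2\sum_i\overline{z_i}=0$, and conjugating yields \eqref{eq:centroid}. No step here is a real obstacle; the only care needed is the Wirtinger bookkeeping, and the scaling argument makes the value $\lambda=2$ independent of that convention.
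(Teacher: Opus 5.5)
Your proof is correct and follows essentially the same route as the paper: the Wirtinger derivative of $L$ and a real Lagrange multiplier, then the pairing identity $z_i/(z_i-z_j)+z_j/(z_j-z_i)=1$ to get $\lambda Q(z)=10$ and hence $\lambda=2$, then antisymmetry for the centroid. Your added remarks are sound details the paper leaves implicit, namely the nonvanishing gradient of the constraint and the scaling (Euler) check showing that $\lambda=2$ does not depend on the Wirtinger convention.
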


\begin{proof}
Wirtinger differentiation gives
\[
 \frac{\partial L}{\partial\overline z_i}
 =\sum_{j\ne i}\frac1{\overline z_i-\overline z_j}.
\]
Thus there is a real multiplier $\lambda$ such that
\[
 \sum_{j\ne i}\frac1{z_i-z_j}=\lambda\overline z_i.
\]
Multiplying by $z_i$ and summing over $i$ yields
\[
 \lambda\sum_i|z_i|^2
 =\sum_{i<j}\left(\frac{z_i}{z_i-z_j}
   +\frac{z_j}{z_j-z_i}\right)=10.
\]
Since $Q(z)=5$, we have $\lambda=2$, proving \eqref{eq:equilibrium}.  Summing \eqref{eq:equilibrium} over $i$ gives $2\sum_i\overline z_i=0$, because the terms on the left cancel in pairs.  This gives \eqref{eq:centroid}.
\end{proof}

Equation~\eqref{eq:equilibrium} is a normalized form of the relative-equilibrium equation for five identical point vortices.  The normalization is immaterial for the shape: if
\[
 \sum_{j\ne i}\frac1{z_i-z_j}=\lambda\overline z_i,
 \qquad \lambda>0,
\]
then a positive real scaling reduces the coefficient to $2$.

There is also an unconstrained formulation that will be useful in Section~\ref{sec:connections}.  Define
\begin{equation}\label{eq:gaussianenergy}
 \Phi(z)=Q(z)-\log\Delta_5(z).
\end{equation}
If $Q(v)=10$, then for $r>0$,
\begin{equation}\label{eq:scaleenergy}
 \Phi(rv)-\Phi(v)
 =10\bigl(r^2-1-\log r^2\bigr)\ge0.
\end{equation}
Thus minimizing $\Phi$ over $\C^5$ is equivalent, after scaling by $\sqrt2$, to maximizing $\Delta_5$ on $Q=5$.  The stationary equation for \eqref{eq:gaussianenergy} is the coefficient-one version of \eqref{eq:equilibrium}.

\section{Polynomial encoding of the stationary equations}\label{sec:poly}

We now study all collision-free solutions of \eqref{eq:equilibrium}.  The argument in Sections~\ref{sec:poly} and~\ref{sec:reflection} does not assume that the solution is a maximizer.

\subsection{Complexification}

Introduce independent variables $x_i,y_i$ and consider
\begin{equation}\label{eq:complexified}
 \sum_{j\ne i}\frac1{x_i-x_j}=2y_i,
 \qquad
 \sum_{j\ne i}\frac1{y_i-y_j}=2x_i,
 \qquad i=1,\ldots,5.
\end{equation}
A physical solution of \eqref{eq:equilibrium} is obtained by setting $x_i=z_i$ and $y_i=\overline z_i$.

The complexified system is invariant under
\begin{equation}\label{eq:Cstar}
 (x_i,y_i)\longmapsto (t x_i,t^{-1}y_i),
 \qquad t\in\C^\times.
\end{equation}
If a physical solution is transformed by \eqref{eq:Cstar}, then
\begin{equation}\label{eq:physicalscaled}
 y_i=\kappa\overline{x_i},
 \qquad
 \kappa=|t|^{-2}>0.
\end{equation}

Summing the equations in \eqref{eq:complexified} and then multiplying the first set by $x_i$ and summing gives
\begin{equation}\label{eq:momentsxy}
 \sum_i x_i=\sum_i y_i=0,
 \qquad
 \sum_i x_i y_i=5.
\end{equation}
Consequently the monic polynomial of the $x_i$ has the form
\begin{equation}\label{eq:pcoeff}
 p(X)=\prod_{i=1}^5(X-x_i)
 =X^5+aX^3+bX^2+cX+d.
\end{equation}
Let
\[
 \delta=\Disc(p)\ne0.
\]
The nonvanishing follows from collision-freeness.

\subsection{The dual root polynomial}

For a simple root $x_i$ of $p$,
\[
 \frac{p''(x_i)}{2p'(x_i)}
 =\sum_{j\ne i}\frac1{x_i-x_j}.
\]
Thus the first half of \eqref{eq:complexified} is equivalent to
\begin{equation}\label{eq:xi}
 \xi_i:=\frac{p''(x_i)}{p'(x_i)}=4y_i.
\end{equation}
Introduce the resultant
\begin{equation}\label{eq:resultant}
 R(Y,T)=\Res_X\bigl(p(X),(Y+TX)p'(X)-p''(X)\bigr).
\end{equation}
Since $p$ is monic of degree five and $\prod_i p'(x_i)=\Disc(p)$,
\begin{equation}\label{eq:Rproduct}
 R(Y,T)=\delta\prod_{i=1}^5(Y+Tx_i-\xi_i).
\end{equation}
Write
\begin{equation}\label{eq:R0coeff}
 R(Y,0)=\delta Y^5+r_3Y^3+r_2Y^2+r_1Y+r_0
\end{equation}
and
\begin{equation}\label{eq:RTcoeff}
 R_T(Y,0)=20\delta Y^3+s_2Y^2+s_1Y+s_0.
\end{equation}
The missing $Y^4$ coefficients follow from $\sum_i x_i=\sum_i\xi_i=0$.  Moreover, by \eqref{eq:momentsxy} and \eqref{eq:xi},
\[
 \sum_i x_i\xi_i=20,
\]
which gives the coefficient $20\delta$ in \eqref{eq:RTcoeff}.

The monic polynomial with roots $y_1,\ldots,y_5$ is obtained directly from \eqref{eq:R0coeff}:
\begin{equation}\label{eq:dualpoly}
 p_y(Y)=\frac{R(4Y,0)}{4^5\delta}
 =Y^5+\frac{r_3}{16\delta}Y^3
 +\frac{r_2}{64\delta}Y^2
 +\frac{r_1}{256\delta}Y
 +\frac{r_0}{1024\delta}.
\end{equation}

The second half of \eqref{eq:complexified}, together with $\xi_i=4y_i$, becomes
\begin{equation}\label{eq:xi-second}
 \sum_{j\ne i}\frac1{\xi_i-\xi_j}=\frac{x_i}{2}.
\end{equation}
The next lemma gives the three relations we shall use.

\begin{lemma}\label{lem:mixedmoments}
Every collision-free solution of \eqref{eq:complexified} satisfies
\begin{equation}\label{eq:mixedmoments}
 \sum_i x_i\xi_i^2=0,
 \qquad
 \sum_i x_i\xi_i^3=7\sum_i\xi_i^2,
 \qquad
 \sum_i x_i\xi_i^4=6\sum_i\xi_i^3.
\end{equation}
Equivalently, the coefficients in \eqref{eq:R0coeff}--\eqref{eq:RTcoeff} satisfy
\begin{equation}\label{eq:threeeqs}
 s_2=0,
 \qquad
 s_1=6r_3,
 \qquad
 s_0=2r_2.
\end{equation}
\end{lemma}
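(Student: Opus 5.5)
The plan is to derive the three identities in \eqref{eq:mixedmoments} directly from \eqref{eq:xi-second} by the usual symmetrization trick, and then translate them into \eqref{eq:threeeqs} by expanding $R_T(Y,0)$ via \eqref{eq:Rproduct}. First I note that the $\xi_i=4y_i$ are pairwise distinct, since the second half of \eqref{eq:complexified} involves $1/(y_i-y_j)$. By \eqref{eq:momentsxy} and \eqref{eq:xi} we also have $\sum_i\xi_i=0$ and $\sum_i x_i\xi_i=20$. Write $p_k=\sum_i\xi_i^k$, so that $p_1=0$.

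For $k\ge1$, multiply \eqref{eq:xi-second} by $\xi_i^k$ and sum over $i$. Pairing the terms $(i,j)$ and $(j,i)$ gives
\[
 \frac12\sum_i x_i\xi_i^k=\sum_{i<j}\frac{\xi_i^k-\xi_j^k}{\xi_i-\xi_j}=\sum_{i<j}h_{k-1}(\xi_i,\xi_j),
\]
where $h_{k-1}$ is the complete homogeneous symmetric polynomial of degree $k-1$ in two variables. With five points, each $\xi_i$ occurs in four pairs. I evaluate the three cases separately:
\begin{itemize}
\item For $k=2$ the sum is $4p_1=0$.
\item For $k=3$ it is $4p_2+e_2$, where $e_2=(p_1^2-p_2)/2=-p_2/2$. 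This gives $\tfrac72p_2$.
\item For $k=4$ it is $4p_3+\sum_{i\ne j}\xi_i^2\xi_j=4p_3+(p_2p_1-p_3)=3p_3$.
\end{itemize}
Multiplying by $2$ yields exactly \eqref{eq:mixedmoments}.

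For the equivalent form, differentiate \eqref{eq:Rproduct} in $T$ at $T=0$:
\[
 R_T(Y,0)=\delta\sum_i x_i\prod_{j\ne i}(Y-\xi_j).
\]
Set $P(Y)=\prod_j(Y-\xi_j)=Y^5+c_2Y^3+c_3Y^2+\cdots$, with $c_2=r_3/\delta$ and $c_3=r_2/\delta$. Then $P(Y)/(Y-\xi_i)$ is the truncated division
\[
 Y^4+\xi_iY^3+(\xi_i^2+c_2)Y^2+(\xi_i^3+c_2\xi_i+c_3)Y+(\xi_i^4+c_2\xi_i^2+c_3\xi_i+c_4).
\]
Using $\sum_i x_i=0$ and $\sum_i x_i\xi_i=20$, I read off the coefficients:
\[
 s_2=\delta\sum_i x_i\xi_i^2,\qquad
 s_1=\delta\sum_i x_i\xi_i^3+20r_3,\qquad
 s_0=\delta\sum_i x_i\xi_i^4+r_3\sum_i x_i\xi_i^2+20r_2.
\]
Newton's identities with $e_1=0$ give $p_2=-2c_2=-2r_3/\delta$ and $p_3=3e_3=-3c_3=-3r_2/\delta$. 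Substituting \eqref{eq:mixedmoments} then gives
\[
 s_2=0,\qquad s_1=-14r_3+20r_3=6r_3,\qquad s_0=-18r_2+20r_2=2r_2.
\]
Conversely, since $\delta\ne0$, these three relations give back \eqref{eq:mixedmoments}, so the two forms are equivalent.

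The only delicate points are bookkeeping: keeping the sign conventions in Newton's identities straight, and making sure the cross term $r_3\sum_i x_i\xi_i^2$ in $s_0$ is killed by the first identity. There is no genuine obstacle beyond that.
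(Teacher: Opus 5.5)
Your proof is correct and follows the paper's argument. You use the same symmetrization of \eqref{eq:xi-second} against $\xi_i^k$ for $k=2,3,4$, then expand $R_T(Y,0)=\delta\sum_i x_i\prod_{j\ne i}(Y-\xi_j)$ with Newton's identities; your explicit synthetic-division formulas for $s_2,s_1,s_0$ (including the cross term $r_3\sum_i x_i\xi_i^2$) spell out the conversion step that the paper only sketches and delegates to its verification script.
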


\begin{proof}
Multiply \eqref{eq:xi-second} by $\xi_i^m$ and sum over $i$.  Pairing the terms indexed by $i,j$ gives
\[
 \sum_i\sum_{j\ne i}\frac{\xi_i^m}{\xi_i-\xi_j}
 =\sum_{i<j}\frac{\xi_i^m-\xi_j^m}{\xi_i-\xi_j}.
\]
For $m=2$, this is $(5-1)\sum_i\xi_i=0$.  For $m=3$, it equals
\[
 4\sum_i\xi_i^2+\sum_{i<j}\xi_i\xi_j
 =\frac72\sum_i\xi_i^2,
\]
because $\sum_i\xi_i=0$.  For $m=4$, it equals
\[
 4\sum_i\xi_i^3
 +\sum_{i<j}(\xi_i^2\xi_j+\xi_i\xi_j^2)
 =3\sum_i\xi_i^3.
\]
Comparison with the right-hand side of \eqref{eq:xi-second} gives \eqref{eq:mixedmoments}.  Expanding \eqref{eq:Rproduct} and applying Newton identities converts these three equalities into \eqref{eq:threeeqs}.  The conversion is also reconstructed symbolically in the verification script.
\end{proof}

\section{Reflection symmetry of every five-point equilibrium}\label{sec:reflection}

The goal of this section is to prove the following structural statement.

\begin{theorem}\label{thm:reflection}
Every collision-free physical solution of \eqref{eq:equilibrium} is invariant under reflection in some line through the origin.
\end{theorem}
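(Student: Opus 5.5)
The plan is to reduce reflection symmetry to a statement about the coefficients $a,b,c,d$ of $p$ in \eqref{eq:pcoeff}. A physical solution is reflection symmetric about a line through the origin exactly when, after a rotation $z\mapsto e^{\ii\phi}z$, the set $\{z_i\}$ is closed under conjugation. Equivalently, $p$ has real coefficients after the substitution $X\mapsto e^{\ii\phi}X$. Under rotation the coefficients transform by $(a,b,c,d)\mapsto(e^{2\ii\phi}a,e^{3\ii\phi}b,e^{4\ii\phi}c,e^{5\ii\phi}d)$, and this corresponds to the torus action \eqref{eq:Cstar} restricted to $|t|=1$. The goal is therefore to show that some $\phi$ makes all four coefficients real simultaneously.

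The first step is to encode the physical condition. Since $y_i=\overline{z_i}$, the dual polynomial $p_y$ in \eqref{eq:dualpoly} must equal $\overline{p}$, the polynomial with conjugated coefficients. Comparing coefficients gives
\[
r_3=16\delta\,\overline a,\qquad r_2=64\delta\,\overline b,\qquad r_1=256\delta\,\overline c,\qquad r_0=1024\delta\,\overline d .
\]
Here each $r_k$ and $\delta$ is an explicit polynomial in $a,b,c,d$ over $\Q$, obtained from the resultant \eqref{eq:resultant}.

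The second step is elimination. I would adjoin the three equations \eqref{eq:threeeqs} and saturate by $\delta$, since collision-freeness gives $\delta\ne0$. A Gr\"obner basis over $\Q$ should then show that the variety is small, finite modulo the $\C^\times$-action, as the paper anticipates. The natural approach is to work with invariants of weight zero under \eqref{eq:Cstar}. In the generic chart $ab\ne0$, I expect elimination to produce two quartic branches in $B=b^2$ (after normalizing $a$), together with the identity $\delta r_2^2=b^2r_3^3$. The normalization of $a$ is available because the scaling $t$ acts on $a$ with weight $2$.

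Substituting the physical relations into this identity gives
\[
\delta\cdot 64^2\delta^2\,\overline b^{\,2}=b^2\,16^3\delta^3\,\overline a^{\,3},
\]
that is, $\overline b^{\,2}=b^2\,\overline a^{\,3}$. Using the weight-zero invariant, this forces $b^2/a^3$ to be real. The quartic in $B$ then has real (rational) coefficients, and combined with the conjugation constraint it is used to show $B$ is real. Once $a$ and $b^2$ are real, I would rotate so that $a$ is real. The remaining freedom is then $\phi\in\{0,\pi/2,\pi,3\pi/2\}$ modulo what is already fixed, and a suitable quarter-turn makes $b$ real as well. I would then express $c$ and $d$ as rational functions of $a$ and $b$ using linear elements of the Gr\"obner basis, which makes them real too.

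The third step handles the exceptional charts. If $a=0$ or $b=0$, I would run a separate saturated elimination in each case. These should be elementary: for instance, $b=0$ with the physical constraint should force a configuration symmetric under $X\mapsto -X$ composed with rotation, and $a=0$ should force the regular pentagon, $p=X^5+d$. In each case I would check directly that a rotation makes the coefficients real.

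The main obstacle is the generic-chart elimination. Its success depends on the Gr\"obner computation over $\Q$ being feasible and on producing precisely the relation linking $\delta$, $r_2$ and $r_3$. From that relation, together with the fact that each $B$-branch has only real-type solutions compatible with conjugation, reality must then be deduced. If $B$ is not directly shown real, I would instead show that the branch quartics have no nonreal roots compatible with $|{\cdot}|$ constraints, using Sturm counts.
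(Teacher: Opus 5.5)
This is essentially the paper's proof: normalize $a=1$ in the chart $ab\ne0$, use the exact elimination (two quartic branches in $B=b^2$, with $c$ and $k=d/b$ in $\Q[B]$, and $\delta r_2^2=b^2r_3^3$ on the saturated ideal) to get $B=\overline B$ from the physical condition, make $b$ and $d$ real by at most a quarter-turn, and handle $a=0$ or $b=0$ by separate saturated eliminations with the corresponding dual coefficients set to zero. Two small corrections: after the complex normalization $a=1$ the physical relation is $y_i=\kappa\overline{x_i}$ with $\kappa=|t|^{-2}$ rather than $p_y=\overline p$ (the powers of $\kappa$ cancel, giving $\overline b^{\,2}=b^2$, so $B\in\R$ immediately and no further argument with the quartic is needed); and the exceptional charts give $d=0$, $c=3/20$ (collinear) when $a=1$, $b=0$, no solution when $a=0$, $b\ne0$, and $cd=0$ (pentagon or centered square) when $a=b=0$, so $a=0$ does not force the pentagon.
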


The proof uses the exact elimination described below.  The detailed input polynomials are recorded in Appendix~\ref{app:generic}.

\subsection{The generic chart}

Assume first that $a\ne0$ and $b\ne0$ in \eqref{eq:pcoeff}.  By the scaling \eqref{eq:Cstar}, we may arrange
\[
 a=1.
\]
Indeed, under $x_i\mapsto tx_i$ the coefficients transform as
\[
 (a,b,c,d)\longmapsto(t^2a,t^3b,t^4c,t^5d).
\]
Set
\begin{equation}\label{eq:Bk}
 B=b^2,
 \qquad
 k=\frac db.
\end{equation}
Substituting $a=1$ and $d=bk$ into \eqref{eq:threeeqs}, dividing the first and third equations by their known factor $b$, and replacing $b^2$ by $B$ gives three polynomials
\[
 F_1(B,c,k),\quad F_2(B,c,k),\quad F_3(B,c,k)\in\Q[B,c,k].
\]
They are displayed in \eqref{eq:F1}--\eqref{eq:F3} in Appendix~\ref{app:generic}.

Let $\widetilde\delta(B,c,k)$ be the discriminant after the same substitution.  Since $b\ne0$ and the roots are distinct, the relevant ideal is the saturation
\begin{equation}\label{eq:Isat}
 \mathcal I
 =\langle F_1,F_2,F_3\rangle:
 (B\widetilde\delta)^\infty.
\end{equation}

\begin{proposition}[Exact generic elimination]\label{prop:genericelim}
The ideal \eqref{eq:Isat} is the intersection of the following two zero-dimensional ideals:
\begin{equation}\label{eq:decomp}
 \mathcal I
 =\langle P_1,c-C_1,k-K_1\rangle
 \cap
 \langle P_2,c-C_2,k-K_2\rangle,
\end{equation}
where
\begin{align}
 P_1(B)&=125B^4+7900B^3-37040B^2+44608B+512,\label{eq:P1}\\
 P_2(B)&=2500B^4-114000B^3+338925B^2-11000B+6912,\label{eq:P2}
\end{align}
and
\begin{align}
 C_1(B)&=\frac{3(125B^3+8020B^2-16496B+4224)}{57088},\label{eq:C1}\\
 K_1(B)&=\frac{75B^3+4812B^2-16320B+13952}{28544},\label{eq:K1}\\
 C_2(B)&=-\frac{1250B^3-69425B^2+327305B-145296}{443535},\label{eq:C2}\\
 K_2(B)&=-\frac{28500B^3-1287200B^2+3884705B-1798816}{5913800}.\label{eq:K2}
\end{align}
The factors $P_1$ and $P_2$ are coprime and $P_1P_2$ is square-free.  In addition, on $\mathcal I$ one has
\begin{equation}\label{eq:dualinvariants}
 \delta r_2^2=b^2r_3^3,
 \qquad
 \delta r_1=cr_3^2,
 \qquad
 \delta r_0=kr_3r_2.
\end{equation}
\end{proposition}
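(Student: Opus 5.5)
This is an exact computer-algebra statement, so the plan is to certify each claim by exact rational computation in $\Q[B,c,k]$. It rests on a reconstruction, an elimination, and a verification of the two sides of \eqref{eq:decomp}.

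First, rebuild $F_1,F_2,F_3$ and $\widetilde\delta$ from the definitions. Compute $R(Y,T)$ in \eqref{eq:resultant} as a Sylvester determinant with $a=1$ and symbolic $b,c,d$. Read off $r_3,r_2,r_1,r_0$ and $s_2,s_1,s_0$, and form $s_2$, $s_1-6r_3$ and $s_0-2r_2$. Each coefficient is weighted-homogeneous under $(a,b,c,d)\mapsto(t^2a,t^3b,t^4c,t^5d)$. A parity argument in $b$, together with $d=bk$, should then show the following: after dividing the first and third expressions by $b$, only even powers of $b$ remain. This justifies the substitution $B=b^2$; I expect the script simply to confirm it.

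Next, compute the saturation $\mathcal I$ by the Rabinowitsch trick. Adjoin a variable $u$ with $uB\widetilde\delta-1$, compute a lex Gr\"obner basis of $\langle F_1,F_2,F_3,uB\widetilde\delta-1\rangle$ with $u>k>c>B$, and discard the generators involving $u$. The expected output is a shape-lemma basis
\[
\{P_1P_2,\ c-C(B),\ k-K(B)\},
\]
with $\deg C,\deg K\le7$.

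To prove \eqref{eq:decomp}, check two things:
\begin{itemize}
\item $\gcd(P_1,P_2)=1$ and $\Disc(P_1P_2)\neq0$, both by exact integer arithmetic;
\item $C\equiv C_i$ and $K\equiv K_i \pmod{P_i}$ for $i=1,2$.
\end{itemize}
The Chinese remainder theorem then identifies the ideal with the stated intersection. Radicality follows from square-freeness together with the linear shape in $c$ and $k$.

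An alternative to trusting the Gr\"obner output is a two-sided certification.
\begin{itemize}
\item For the inclusion $\mathcal I\subseteq$ RHS, reduce each $F_j$ modulo $\langle P_i,c-C_i,k-K_i\rangle$ and check that it gives $0$.
\item For the reverse inclusion, exhibit explicit cofactors expressing $(B\widetilde\delta)^N P_1P_2$, and likewise the linear generators, in $\langle F_1,F_2,F_3\rangle$.
\end{itemize}
The cofactor certificate is the step I expect to be the main obstacle. The polynomials are large, since $\widetilde\delta$ has high degree, and naive lex bases can blow up. I would first try a grevlex basis followed by FGLM conversion to lex, using the fact that the ideal is zero-dimensional of degree $8$. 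If that is too slow, I would eliminate $k$ first by resultants, because each $F_j$ has low degree in $k$, and then do a univariate gcd-based saturation in $B$.

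Finally, the invariants \eqref{eq:dualinvariants} follow from ideal membership. Express $\delta$, $r_3$, $r_2$, $r_1$, $r_0$ in $B,c,k$ (using $b^2=B$; the products appearing are even in $b$). Then reduce
\[
\delta r_2^2-Br_3^3,\qquad \delta r_1-cr_3^2,\qquad \delta r_0-kr_3r_2
\]
modulo each component. Concretely, substitute $c=C_i(B)$ and $k=K_i(B)$ and take the remainder modulo $P_i(B)$, checking that it vanishes for $i=1,2$. These are routine univariate reductions over $\Q$.
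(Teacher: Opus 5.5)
Your plan is the paper's own computation. It uses Rabinowitsch saturation of $\langle F_1,F_2,F_3\rangle$ by $B\widetilde\delta$, and a grevlex basis converted to lex by FGLM, giving a shape basis with eliminant $P_1P_2$. It then reduces modulo each $P_j$ to read off $C_j,K_j$, checks coprimality and square-freeness exactly, and verifies \eqref{eq:dualinvariants} by zero normal forms.

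Two small corrections:
\begin{enumerate}
\item After $d=bk$, the numerator $\delta r_0-kr_3r_2$ is odd in $b$ (weight $35$), so it is not even as you claim. You must divide out a factor $b$ before passing to $B$.
\item In your optional two-sided certificate, reducing the $F_j$ only shows $\langle F_1,F_2,F_3\rangle\subseteq$ the right-hand side of \eqref{eq:decomp}. To conclude $\mathcal I\subseteq$ the right-hand side, you also need $B\widetilde\delta\ne0$ at all eight points. The direct Gr\"obner computation of the saturation, which is your main route, does not need this extra check.
\end{enumerate}
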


\begin{proof}
All assertions are identities over $\Q$.  The saturation is computed by the Rabinowitsch trick: introduce $s$ and compute a Gr\"obner basis of
\[
 \langle F_1,F_2,F_3,sB\widetilde\delta-1\rangle
 \subset\Q[s,k,c,B].
\]
A change to lexicographic order gives a triangular zero-dimensional basis whose eliminant is the monic form of $P_1P_2$; reduction modulo $P_j$ gives precisely \eqref{eq:C1}--\eqref{eq:K2}.  Exact polynomial division verifies the two inclusions in \eqref{eq:decomp}, the coprimality and square-freeness assertions, and zero normal form for each numerator in \eqref{eq:dualinvariants}.  Appendix~\ref{app:generic} gives the input polynomials, and the accompanying verification script reconstructs the resultant \eqref{eq:resultant} from scratch and performs every division over $\Q$.
\end{proof}

We now impose the physical conjugacy condition.  Let
\[
 p_y(Y)=Y^5+a_yY^3+b_yY^2+c_yY+d_y
\]
be the polynomial in \eqref{eq:dualpoly}.  From \eqref{eq:dualinvariants},
\begin{equation}\label{eq:ratio-invariant}
 \frac{b_y^2}{a_y^3}
 =\frac{\delta r_2^2}{r_3^3}
 =b^2=B.
\end{equation}
Since our scaling satisfies \eqref{eq:physicalscaled} and $a=1$,
\[
 a_y=\kappa^2>0,
 \qquad
 b_y=\kappa^3\overline b.
\]
Therefore
\[
 \frac{b_y^2}{a_y^3}=\overline b^{\,2}=\overline B.
\]
Comparison with \eqref{eq:ratio-invariant} yields
\begin{equation}\label{eq:Breal}
 B=\overline B.
\end{equation}
Thus $B\in\R$.  Proposition~\ref{prop:genericelim} then gives $c,k\in\R$.  If $B>0$, then $b$ and $d=bk$ are real, so $p$ has real coefficients.  If $B<0$, then $b$ and $d$ are purely imaginary; after multiplying all roots by $\ii$, the coefficients become
\[
 a'=-1,
 \qquad b'=-\ii b,
 \qquad c'=c,
 \qquad d'=\ii d,
\]
which are real.  Hence the root set is invariant under conjugation after a rotation.

\subsection{Exceptional charts}

It remains to treat $a=0$ or $b=0$.  A zero coefficient of $p$ gives the corresponding zero coefficient of the physical dual polynomial, because \eqref{eq:physicalscaled} transforms the coefficient of degree $5-j$ by the positive factor $\kappa^j$ followed by conjugation.  Adding these necessary dual conditions to \eqref{eq:threeeqs} and saturating by $\delta$ gives the following exact alternatives.

\begin{proposition}\label{prop:exceptional}
For a collision-free physical solution:
\begin{enumerate}
 \item If $a\ne0$ and $b=0$, then after normalizing $a=1$,
 \begin{equation}\label{eq:exceptional1}
  d=0,
  \qquad
  c=\frac3{20}.
 \end{equation}
 Hence $p(X)=X^5+X^3+\frac3{20}X$, whose roots lie on a line.
 \item The chart $a=0$, $b\ne0$ contains no physical solution.
 \item If $a=b=0$, then $cd=0$.  Thus, up to scaling,
 \[
  p(X)=X^5+cX
  \quad\text{or}\quad
  p(X)=X^5+d,
 \]
 corresponding to the centered square or a regular pentagon.
\end{enumerate}
\end{proposition}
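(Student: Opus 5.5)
The plan is to treat each chart as a separate exact elimination, in the same spirit as Proposition~\ref{prop:genericelim}.

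\textbf{Encoding the dual conditions.} I would specialize the three relations \eqref{eq:threeeqs} to the chart and add the conditions forced on the dual polynomial \eqref{eq:dualpoly}. By \eqref{eq:physicalscaled}, the coefficient of $Y^{5-j}$ in $p_y$ is $\kappa^j$ times the conjugate of the coefficient of $X^{5-j}$ in $p$. Hence $b=0$ forces $r_2=0$, and $a=0$ forces $r_3=0$. Wherever $a\ne0$, we also have $a_y=\kappa^2\overline a$, so after normalizing $a=1$ we get $r_3/(16\delta)=\kappa^2>0$. This positivity is used only at the end, to discard nonphysical complex solutions. Each enlarged system is then saturated by $\delta$ using the Rabinowitsch variable. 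All computations are over $\Q$ and are reproduced in the verification script.

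\textbf{Case (1): $a=1$, $b=0$.} The variables are $c,d$, subject to $F$-type equations from \eqref{eq:threeeqs} and $r_2=0$. I expect the saturated Gr\"obner basis to force $d=0$ and $20c=3$. If extra components survive, I would rule them out using the reality conditions $r_1/(256\delta)=\kappa^3\overline c$ and $r_0/(1024\delta)=\kappa^4\overline d$ together with $\kappa>0$. The surviving polynomial $X^5+X^3+\tfrac3{20}X=X(X^4+X^2+\tfrac3{20})$ has $X^2$-roots $\bigl(-1\pm\sqrt{2/5}\bigr)/2$, which are both negative. So all five roots are purely imaginary, they lie on a line, and the configuration is reflection symmetric.

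\textbf{Case (2): $a=0$, $b\ne0$.} Normalize $b=1$ using $t^3$. The system consists of \eqref{eq:threeeqs}, the condition $r_3=0$, and saturation by $\delta$. I expect the ideal to become the unit ideal, which rules out even complex solutions. If it does not, the remaining finitely many complex solutions will be excluded by conjugacy. The condition $b_y=\kappa^3\overline b$ gives $r_2/(64\delta)=\kappa^3>0$ real positive, and I would compare this with the ratios of the other dual coefficients, exactly as in \eqref{eq:Breal}. This is the step I expect to be hardest. If the ideal is not trivial, one needs an invariant combination analogous to $b_y^2/a_y^3$ (for example $c_y^3/b_y^4$ paired with $c^3/b^4$) whose reality contradicts the exact solutions.

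\textbf{Case (3): $a=b=0$.} Here $p=X^5+cX+d$, and the conditions are \eqref{eq:threeeqs} together with $r_3=r_2=0$. The expected result of elimination is $cd=0$. The factor $d=0$ gives $X(X^4+c)$, which is the centered square. The factor $c=0$ gives $X^5+d$, which is the regular pentagon. Both are visibly reflection symmetric.

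The three cases, together with the generic chart, complete Theorem~\ref{thm:reflection}.
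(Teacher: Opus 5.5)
Your proposal follows the paper's proof: in each chart you adjoin the forced vanishing of the matching dual coefficient ($r_2=0$ when $b=0$, $r_3=0$ when $a=0$) to \eqref{eq:threeeqs}, saturate by $\delta$, and read off the Gr\"obner bases. The paper reports these as $\{s-\frac{3125}{27},\,d,\,c-\frac{3}{20}\}$, $\{1\}$, and $\langle cd\rangle$ for the three charts, so your fallback arguments (positivity of $\kappa$, reality invariants in case (2)) are never needed.
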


\begin{proof}
Normalize $a=1$ in the first case and $b=1$ in the second.  The saturated lexicographic bases are, respectively,
\[
 \left\{s-\frac{3125}{27},\ d,\ c-\frac3{20}\right\},
 \qquad
 \{1\}.
\]
In the third case, the $s$-free part of the saturated basis is generated by $cd$.  These are exact Gr\"obner-basis computations over $\Q$ and are reproduced in Appendix~\ref{app:generic} and in the verification script.  Finally,
\[
 X^5+X^3+\frac3{20}X
 =X\left(X^4+X^2+\frac3{20}\right),
\]
and both roots of the quadratic equation in $X^2$ are negative real numbers; hence all five roots are collinear after rotation.
\end{proof}

\begin{proof}[Proof of Theorem~\ref{thm:reflection}]
The generic case follows from \eqref{eq:Breal} and the discussion after it.  Proposition~\ref{prop:exceptional} treats all remaining coefficient charts.  A monic polynomial with real coefficients has a root set invariant under complex conjugation.  Undoing the common rotation gives reflection symmetry about a line through the origin.
\end{proof}

\section{Classification of the reflection-symmetric equilibria}\label{sec:classification}

Rotate the reflection axis to the real axis.  Because there are five points, the number of real points is $1$, $3$, or $5$.  We treat these cases separately.

For clarity, Table~\ref{tab:notation} separates the coefficient notation used in
the unrestricted and dual polynomials from the local parameters used below.

\begin{table}[ht]
\centering
\caption{Notation used in the elimination and reflection-symmetric classification.}
\label{tab:notation}
\begin{tabular}{@{}ll@{}}
\toprule
Symbols & Meaning\\
\midrule
$a,b,c,d$ & coefficients of the root polynomial $p$ in \eqref{eq:pcoeff}\\
$a_y,b_y,c_y,d_y$ & coefficients of the dual root polynomial $p_y$\\
$h,\eta,\varepsilon$ & one-real-point parameters in \eqref{eq:ueta}--\eqref{eq:vepsilon}\\
$u,v,\beta,C$ & three-real-point parameters in \eqref{eq:32factor}--\eqref{eq:beta32}\\
\bottomrule
\end{tabular}
\end{table}

\subsection{One real point and two conjugate pairs}

Write
\begin{equation}\label{eq:factor12}
 p(X)=(X-h)q_1(X)q_2(X),
 \qquad
 q_j(X)=X^2+u_jX+v_j,
\end{equation}
where $h,u_j,v_j\in\R$ and each $q_j$ has a nonreal conjugate pair of roots.  The centroid and moment conditions are incorporated by writing
\begin{equation}\label{eq:ueta}
 u_1=\frac{h+\eta}{2},
 \qquad
 u_2=\frac{h-\eta}{2},
\end{equation}
and
\begin{equation}\label{eq:vepsilon}
 v_1=\frac{5-h^2+2\varepsilon}{4},
 \qquad
 v_2=\frac{5-h^2-2\varepsilon}{4}.
\end{equation}
Indeed $h-u_1-u_2=0$ and $h^2+2v_1+2v_2=5$.

For a root $z$ of $q_j$, one has $\overline z=-u_j-z$.  The equilibrium equation is therefore equivalent to
\begin{equation}\label{eq:remainders}
 p''(h)-4hp'(h)=0,
\end{equation}
\begin{equation}\label{eq:remq1}
 p''+4(X+u_1)p'\equiv0\pmod{q_1},
\end{equation}
\begin{equation}\label{eq:remq2}
 p''+4(X+u_2)p'\equiv0\pmod{q_2}.
\end{equation}
These are five polynomial equations in $h,\eta,\varepsilon$.

\begin{proposition}\label{prop:one-real-elim}
A lexicographic Gr\"obner basis of \eqref{eq:remainders}--\eqref{eq:remq2} contains
\begin{equation}\label{eq:etaepsilon-rel}
 2\eta\varepsilon=5h(h^2-1),
\end{equation}
\begin{equation}\label{eq:eta2-rel}
 h\bigl(276\eta^2+2450h^8-14875h^6+28975h^4
 -16475h^2-1455\bigr)=0,
\end{equation}
and
\begin{equation}\label{eq:elim-h}
 h(h^2-1)
 \bigl(10h^8-65h^6+140h^4-97h^2+6\bigr)=0.
\end{equation}
The complete six-element basis is given in Appendix~\ref{app:symmetric}.
\end{proposition}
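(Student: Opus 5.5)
The plan is to make the five equations \eqref{eq:remainders}--\eqref{eq:remq2} explicit in $\Q[h,\eta,\varepsilon]$ and then eliminate exactly, in the same style as Proposition~\ref{prop:genericelim}.

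First I construct the equations. Substituting \eqref{eq:ueta}--\eqref{eq:vepsilon} into \eqref{eq:factor12} writes $p$ with coefficients polynomial in $h,\eta,\varepsilon$. Equation \eqref{eq:remainders} is one scalar polynomial. For \eqref{eq:remq1}, I reduce $p''+4(X+u_1)p'$ modulo the monic quadratic $q_1$ by polynomial division in $X$. The remainder has the form $\alpha_1 X+\beta_1$, and both coefficients must vanish; the same holds for $q_2$. This gives five polynomials $E_0,\alpha_1,\beta_1,\alpha_2,\beta_2$.

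These conditions are equivalent to the equilibrium equations only at simple roots. For a root $z$ of $q_j$ we have $p''(z)/(2p'(z))=\sum_{w\ne z}1/(z-w)$, and the equation $\sum 1/(z-w)=2\overline z=-2(u_j+z)$ becomes $p''+4(X+u_j)p'=0$ at both roots of $q_j$. Since the roots of $q_j$ are distinct, this is the same as divisibility by $q_j$. The real point gives $p''(h)=4h\,p'(h)$, which is \eqref{eq:remainders}.

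The symmetry $\eta\mapsto-\eta$, $\varepsilon\mapsto-\varepsilon$ swaps $q_1$ and $q_2$. So the ideal is invariant under this swap. That explains why only $\eta\varepsilon$ and $\eta^2$ appear in \eqref{eq:etaepsilon-rel}--\eqref{eq:eta2-rel}.

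Next I compute a lexicographic Gr\"obner basis over $\Q$ with $\varepsilon>\eta>h$ and check that the three displayed polynomials belong to it. The statement only claims that the basis contains these elements, with no saturation. So no discriminant or Rabinowitsch variable is needed, and the claim is a pure ideal-membership statement.

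Membership of \eqref{eq:etaepsilon-rel}, \eqref{eq:eta2-rel} and \eqref{eq:elim-h} can be certified independently of the basis computation. For this I exhibit cofactors $g_k$ with $\sum g_k E_k$ equal to each target, or equivalently I reduce each target to zero normal form modulo the computed basis. As a first sanity check by hand, I would try to derive \eqref{eq:etaepsilon-rel} from a low-degree combination. It should come from a sum-rule identity such as $\sum x_i\xi_i^2=0$ of Lemma~\ref{lem:mixedmoments}, restricted to this ansatz. The sextic factor $10h^8-65h^6+\cdots$ is then the eliminant, matching \eqref{eq:introquartic} with $H=h^2$.

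The main obstacle is computational size: $p$ has degree five and each remainder has fairly high degree in $(h,\eta,\varepsilon)$. I would first use the $\eta\varepsilon$ relation to reduce. Then, if needed, I would pass to invariants $E=\eta^2$ and $P=\eta\varepsilon$, or eliminate $\varepsilon$ via \eqref{eq:etaepsilon-rel} after splitting off $\eta=0$. The elimination is finished by a resultant in $\eta$. The final verification of all six basis elements is delegated to exact SymPy arithmetic, as in the accompanying script.
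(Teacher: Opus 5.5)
Your proposal is correct and matches the paper's proof. Both build \eqref{eq:remainders} and the four remainder coefficients from \eqref{eq:factor12}--\eqref{eq:vepsilon}, compute the lexicographic basis over $\Q$ with $\varepsilon>\eta>h$ exactly, and read off the three elements; your observation that \eqref{eq:etaepsilon-rel} is just $\sum_i|z_i|^2z_i=0$ restricted to the ansatz is a correct additional check. Two minor points: $10h^8-65h^6+\cdots$ is an octic in $h$ (a quartic in $H$), not a sextic, and cofactor certificates prove only ideal membership (which is all the later argument uses), so the literal claim that the basis \emph{contains} these polynomials rests on the direct basis computation, as in the paper.
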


\begin{proof}
Expand \eqref{eq:remainders} and take the two coefficients of each of the linear remainders in \eqref{eq:remq1}--\eqref{eq:remq2}.  Exact Buchberger reduction over $\Q$ gives the basis displayed in Appendix~\ref{app:symmetric}; the last, second, and fifth elements yield \eqref{eq:etaepsilon-rel}--\eqref{eq:elim-h}.  The verification script constructs these five equations directly from \eqref{eq:factor12}--\eqref{eq:vepsilon} and checks the basis.
\end{proof}

We now read off the physical solutions.

If $h=0$, the specialized equations reduce to
\begin{equation}\label{eq:hzero}
 \eta^2+4\varepsilon^2=10,
 \qquad \eta\varepsilon=0,
 \qquad \eta(\eta^2-10)=0.
\end{equation}
Thus either $\eta^2=10$, $\varepsilon=0$, giving the centered square, or $\eta=0$, $\varepsilon^2=5/2$, giving the collinear configuration.

If $h^2=1$, equations \eqref{eq:etaepsilon-rel}--\eqref{eq:eta2-rel} give
\[
 \eta^2=5,
 \qquad \varepsilon=0.
\]
Then $v_1=v_2=1$, and \eqref{eq:factor12} is $X^5-1$ or $X^5+1$ after a possible rotation.  This is the regular pentagon.

It remains to consider
\begin{equation}\label{eq:QH}
 Q_*(H):=10H^4-65H^3+140H^2-97H+6=0,
 \qquad H=h^2.
\end{equation}
On this branch the original equations imply
\begin{equation}\label{eq:JH}
 \eta^2=J(H):=
 \frac{-350H^3+1775H^2-2430H+975}{92},
\end{equation}
and
\begin{equation}\label{eq:epsilonH}
 \varepsilon=\frac{5h(H-1)}{2\eta}.
\end{equation}
These relations are sufficient as well as necessary: after substituting \eqref{eq:JH}--\eqref{eq:epsilonH}, each numerator of the original five equations \eqref{eq:remainders}--\eqref{eq:remq2} has zero remainder modulo
\[
 Q_*(H),\qquad h^2-H,\qquad \eta^2-J(H).
\]

Sturm's theorem gives exactly two real roots of $Q_*$, both positive.  We denote them by $H_-<H_+$; exact isolating intervals are
\begin{equation}\label{eq:Hintervals}
 \frac{68395128919}{10^{12}}<H_-<\frac{68395128920}{10^{12}},
\end{equation}
\begin{equation}\label{eq:Hintervals2}
 \frac{1214112777579}{10^{12}}<H_+<\frac{1214112777580}{10^{12}}.
\end{equation}
On each interval, exact rational interval arithmetic gives $J(H)>0$ and
\begin{equation}\label{eq:discquadneg}
 \frac{5H+J(H)-20}{2}<0.
\end{equation}
The left-hand side of \eqref{eq:discquadneg} is the sum of the two quadratic
discriminants.  Their product is the expression in \eqref{eq:factorA}; exact
rational interval evaluation shows that it is positive on each interval
\eqref{eq:Hintervals}--\eqref{eq:Hintervals2}.  Thus both quadratic
discriminants are negative.  Moreover, $J(H)>0$ gives $\eta\ne0$, so
$q_1\ne q_2$.  Two distinct real quadratics with nonreal conjugate roots cannot
share a root, and the real root $h$ cannot coincide with a root of either
quadratic.  Hence both roots $H_\pm$ give collision-free configurations with
one real point and two distinct nonreal conjugate pairs.  Up to rotation and
relabeling, each $H_\pm$ gives one shape.  Indeed $\eta\mapsto-\eta$,
$\varepsilon\mapsto-\varepsilon$ merely exchanges $q_1$ and $q_2$, while
$h\mapsto-h$, $\eta\mapsto-\eta$ is multiplication of all roots by $-1$.

For reference, the two configurations have the approximate coordinates
\begin{align}\label{eq:Eminus}
 E_-:\quad
 &0.2615246239,\nonumber\\
 &-0.8103785650\pm0.6884696495\ii,\nonumber\\
 &\phantom{-}0.6796162530\pm0.9344625768\ii,
\end{align}
and
\begin{align}\label{eq:Eplus}
 E_+:\quad
 &1.1018678585,\nonumber\\
 &-0.3757025983\pm1.2413092565\ii,\nonumber\\
 &-0.1752313310\pm0.4245426706\ii.
\end{align}
The decimals play no role in the proof.

\subsection{Three real points and one conjugate pair}

Write
\begin{equation}\label{eq:32factor}
 p(X)=c_3(X)q(X),
\end{equation}
where
\[
 c_3(X)=X^3-uX^2+\beta X+C,
 \qquad
 q(X)=X^2+uX+v.
\]
The centroid is automatically zero, and the moment condition gives
\begin{equation}\label{eq:beta32}
 \beta=\frac{u^2+2v-5}{2}.
\end{equation}
The three real roots satisfy $p''-4Xp'=0$, whereas for a root $z$ of $q$ one has $\overline z=-u-z$.  Hence
\begin{equation}\label{eq:32remainders}
 p''-4Xp'\equiv0\pmod{c_3},
 \qquad
 p''+4(X+u)p'\equiv0\pmod q.
\end{equation}

\begin{proposition}\label{prop:32}
The equations \eqref{eq:32remainders} imply
\begin{equation}\label{eq:u32}
 u\bigl(10u^8-55u^6+115u^4-37u^2-9\bigr)=0.
\end{equation}
If $u=0$, they give $C=0$ and $v=5/4$, hence the centered square.  If $u\ne0$ and $U=u^2$, then
\begin{equation}\label{eq:Q32}
 10U^4-55U^3+115U^2-37U-9=0
\end{equation}
and
\begin{equation}\label{eq:V32}
 v=-\frac{130U^3-625U^2+1074U-211}{304}.
\end{equation}
The polynomial in \eqref{eq:Q32} has exactly one positive root, lying in
\[
 0.607600<U<0.607601,
\]
and \eqref{eq:V32} is negative throughout this interval.  Consequently the $u\ne0$ branch cannot contain a nonreal conjugate pair.
\end{proposition}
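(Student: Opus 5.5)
The plan mirrors the proof of Proposition~\ref{prop:one-real-elim}. First I would write out the five polynomial equations in the unknowns $u,v,C$. Substitute \eqref{eq:beta32} into \eqref{eq:32factor}, so that $p=c_3q$ has coefficients in $\Q[u,v,C]$. Then reduce $p''-4Xp'$ modulo the monic cubic $c_3$, which leaves a quadratic remainder and hence three coefficient equations. Reduce $p''+4(X+u)p'$ modulo $q$, which leaves a linear remainder and hence two more. Collision-freeness is not needed to derive these equations. It is needed only to discard spurious components, so I would saturate by the discriminant of $p$, or at least by $\Res(c_3,q)$ and the discriminants of $c_3$ and $q$, using the Rabinowitsch variable exactly as in Proposition~\ref{prop:genericelim}. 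Next I would compute a lexicographic Gr\"obner basis over $\Q$ with $C>v>u$. I expect its eliminant in $u$ to be \eqref{eq:u32}; the factor $u$ is forced because $u\mapsto-u$ together with $X\mapsto -X$ is a symmetry. The basis should also contain elements that are linear in $v$ and $C$ over the $u$-components.

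On the component $u=0$, I would specialize and read off $C=0$ and $v=5/4$. Then $p=X(X^2+\beta)(X^2+5/4)$ with $\beta=0$ from \eqref{eq:beta32}. This would give a repeated root at $0$, so I need to recheck the specialization; presumably the computation actually yields $c_3=X(X^2-\beta')$ with real roots $\pm$ and $q=X^2+5/4$. That is still a centered square, with points $0$, $\pm r$ on the axis and $\pm i r$ off it, consistent with the square found in the one-real-point case. I would verify this numerically, with exact arithmetic, from the basis element expressing $C$ and $v$.

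On the branch $u\neq0$, I would take the basis element expressing $v$ modulo the quartic \eqref{eq:Q32} in $U=u^2$; it should be linear in $v$ with coefficients depending only on $U$, which gives \eqref{eq:V32}. Then a Sturm sequence, or simply Descartes' rule of signs (the coefficient signs $+,-,+,-,-$ show three sign changes, so this must be sharpened by Sturm), counts the positive roots of \eqref{eq:Q32}. Exact rational evaluation at $0.607600$ and $0.607601$ isolates the root. To show that \eqref{eq:V32} is negative on that interval, I would bound the cubic numerator below by rational interval arithmetic. Negative $v$ means $q$ has discriminant $u^2-4v>0$, so $q$ has real roots and there is no nonreal conjugate pair, which is the final claim.

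The main obstacle is the elimination itself. A full lex basis in three variables with saturation may be heavy, and spurious components, such as coincidences between roots of $c_3$ and $q$, have to be removed correctly. I would first try computing a grevlex basis of the saturated ideal and then converting it with FGLM. If this is too slow, I would eliminate $C$ first, since it enters linearly in $c_3$, by taking resultants, and only then saturate.
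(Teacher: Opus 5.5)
Your plan is essentially the paper's proof. The paper computes the lexicographic basis in the order $C>v>u$ without any saturation, which is unnecessary here. That basis consists of \eqref{eq:u32} and two elements linear in $C$ and $v$, and \eqref{eq:V32} comes from reducing the $v$-element modulo \eqref{eq:Q32}. Sturm's theorem, rational interval evaluation, and the observation that $v<0$ excludes a nonreal pair then finish exactly as you describe.

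The one slip is at $u=0$. There \eqref{eq:beta32} gives $\beta=(\tfrac52-5)/2=-\tfrac54$, not $0$. So $c_3=X(X^2-\tfrac54)$ and $q=X^2+\tfrac54$ give the centered square $0,\pm\tfrac{\sqrt5}{2},\pm\tfrac{\sqrt5}{2}\ii$ directly, with no repeated root to worry about.

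Also, the $X\mapsto-X$ symmetry only makes the eliminant even or odd in $u$. The factor $u$ appears because $u=0$ actually carries this solution, not because the symmetry forces it.
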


\begin{proof}
The exact lexicographic basis is displayed in Appendix~\ref{app:symmetric}.  Its last element is \eqref{eq:u32}; the remaining elements give \eqref{eq:V32}.  Sturm's theorem proves the asserted positive-root count, and direct rational interval evaluation proves $v<0$.  If $q$ had a nonreal conjugate pair, then $v$ would be the squared modulus of either root and hence positive, a contradiction.
\end{proof}

\subsection{Five real points}

If all five roots are real, then \eqref{eq:equilibrium} gives
\[
 p''(z_i)=4z_i p'(z_i)
\]
at each root.  Therefore
\[
 p''(X)-4Xp'(X)+20p(X)
\]
has degree at most four and vanishes at five distinct points.  It is identically zero.

\begin{proposition}\label{prop:collinear}
The unique monic degree-five solution of
\begin{equation}\label{eq:HermiteODE}
 p''-4Xp'+20p=0
\end{equation}
with zero $X^4$ coefficient is
\begin{equation}\label{eq:collinearpoly}
 p(X)=X^5-\frac52X^3+\frac{15}{16}X.
\end{equation}
Its roots are
\begin{equation}\label{eq:collinearroots}
 0,
 \qquad
 \pm\sqrt{\frac{5+\sqrt{10}}4},
 \qquad
 \pm\sqrt{\frac{5-\sqrt{10}}4}.
\end{equation}
\end{proposition}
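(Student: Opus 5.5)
We prove this by substituting a general monic quintic into \eqref{eq:HermiteODE} and comparing coefficients. Write $p(X)=X^5+aX^3+bX^2+cX+d$, using that the $X^4$ coefficient vanishes. Then
\[
 p''-4Xp'+20p
 =\sum_k \bigl((k+2)(k+1)p_{k+2}-4kp_k+20p_k\bigr)X^k ,
\]
where $p_k$ is the coefficient of $X^k$. The $X^5$ coefficient is $-20+20=0$ automatically. The $X^4$ coefficient is $20\cdot 0+(20-16)\cdot 0=0$, so the centroid condition is consistent with the equation.

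The remaining coefficients give a triangular system that determines everything from the top down.
\begin{itemize}
 \item $X^3$: $20+(20-12)a=0$, so $a=-5/2$.
 \item $X^2$: $(20-8)b=0$, since the $X^4$ coefficient is zero; hence $b=0$.
 \item $X^1$: $6a+(20-4)c=0$, so $c=-6a/16=15/16$.
 \item $X^0$: $2b+20d=0$, so $d=0$.
\end{itemize}
Each step divides by a nonzero factor $20-4k$ for $k\le 3$, which gives both existence and uniqueness. This yields \eqref{eq:collinearpoly}.

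For the roots, factor $p(X)=X\bigl(X^4-\tfrac52X^2+\tfrac{15}{16}\bigr)$. Setting $W=X^2$ gives $16W^2-40W+15=0$, so
\[
 W=\frac{40\pm\sqrt{1600-960}}{32}=\frac{5\pm\sqrt{10}}{4}.
\]
Both values are positive because $\sqrt{10}<5$. This gives the five distinct real roots \eqref{eq:collinearroots}.

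There is no genuine obstacle here. The only point needing care is to record that the $X^4$ coefficient vanishes identically rather than imposing an extra constraint. That same fact justifies the remark preceding the proposition that $p''-4Xp'+20p$ has degree at most four. Indeed, it has degree at most three, which is consistent with vanishing at five distinct points. One could also note that, after the rescaling $X=Y/\sqrt2$, this is the Hermite polynomial $H_5$, but the direct coefficient comparison above suffices.
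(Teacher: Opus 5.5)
Your proof is correct and is essentially the paper's argument: you substitute the general monic quintic without an $X^4$ term, solve the triangular coefficient system ($a=-\tfrac52$, $b=0$, $c=\tfrac{15}{16}$, $d=0$), and factor the even quartic via $W=X^2$. The only slip is cosmetic: the $X^4$ coefficient is $30p_6+4p_4$, not $20\cdot 0+4\cdot 0$, which changes nothing since $p_6=0$; it also shows that the equation by itself already forces $p_4=0$.
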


\begin{proof}
Substitution of a general monic quintic with no $X^4$ term into \eqref{eq:HermiteODE} determines the remaining coefficients uniquely and gives \eqref{eq:collinearpoly}.  Factoring the even quartic after removing the root $0$ gives \eqref{eq:collinearroots}.
\end{proof}

Combining Theorem~\ref{thm:reflection} with the three cases above gives the complete stationary classification; see
also Figure~\ref{fig:configurations}.

\begin{theorem}\label{thm:classification}
Up to rotation and permutation, the collision-free physical solutions of \eqref{eq:equilibrium} consist of exactly five configurations:
\begin{enumerate}
 \item the regular pentagon;
 \item the centered square;
 \item the collinear configuration \eqref{eq:collinearroots};
 \item the two nonclassical configurations $E_-$ and $E_+$ determined exactly by \eqref{eq:QH}--\eqref{eq:epsilonH} with $H=H_-$ and $H=H_+$.
\end{enumerate}
\end{theorem}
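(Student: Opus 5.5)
The classification is assembled from the structural results above. By Theorem~\ref{thm:reflection}, every collision-free physical solution of \eqref{eq:equilibrium} is, after a rotation, invariant under complex conjugation. Its root polynomial $p$ therefore has real coefficients. Since there are five roots, the number of real roots is $1$, $3$ or $5$, and every solution falls under exactly one of the three subsections of Section~\ref{sec:classification}.

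\textbf{Collecting the cases.} We gather the output of each subsection in turn.
\begin{enumerate}
 \item \emph{One real point.} Proposition~\ref{prop:one-real-elim} and the discussion after it give four outcomes. The branch $h=0$ yields the centered square or the collinear configuration. The branch $h^2=1$ yields the regular pentagon. The branch $Q_*(H)=0$ yields exactly the two shapes $E_\pm$. Their existence and collision-freeness rest on the Sturm count, the sign conditions \eqref{eq:discquadneg}, and the sufficiency check modulo $Q_*$, $h^2-H$, $\eta^2-J(H)$. The residual symmetries $\eta\mapsto-\eta,\varepsilon\mapsto-\varepsilon$ and $h\mapsto-h,\eta\mapsto-\eta$ show that each $H_\pm$ contributes exactly one shape up to rotation and permutation.
 \item \emph{Three real points.} Proposition~\ref{prop:32} shows that the $u\ne0$ branch cannot carry a nonreal conjugate pair. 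The branch $u=0$ gives only the centered square.
 \item \emph{Five real points.} Proposition~\ref{prop:collinear} gives only the collinear configuration \eqref{eq:collinearroots}.
\end{enumerate}
The exceptional charts of Proposition~\ref{prop:exceptional} produce nothing outside this list. The polynomial $X^5+X^3+\frac3{20}X$ has all roots collinear after a quarter-turn. One can check that it is a rotated, rescaled copy of \eqref{eq:collinearpoly}: replacing $X$ by $\ii X$ and rescaling matches the coefficient ratios. The polynomials $X^5+cX$ and $X^5+d$ are the centered square and the regular pentagon.

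\textbf{Distinctness.} It remains to check that the five listed shapes are pairwise non-congruent under rotation and permutation. The scale is fixed by \eqref{eq:momentsxy} together with $Q=5$. I would separate the shapes by invariants preserved under rotation and permutation. The number of points lying on a common line through the origin separates the collinear configuration. The presence of a point at the origin separates the centered square from the pentagon. The regular pentagon has all moduli equal to $1$, which $E_\pm$ do not. The two shapes $E_-$ and $E_+$ are distinguished by the real invariant $H=h^2$: the intervals \eqref{eq:Hintervals}--\eqref{eq:Hintervals2} are disjoint.

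\textbf{Main obstacle.} The main obstacle is not a new computation but the bookkeeping. One must check that the value of $H$ is an invariant of the shape rather than of the chosen labeling. The reflection axis is unique for $E_\pm$, since they have exactly one real point after symmetrization, so $h$ is canonically determined up to sign; this needs to be argued. One must also confirm that every shape genuinely solves \eqref{eq:equilibrium}. For $E_\pm$ this is the sufficiency remark. For the square, the pentagon and the collinear configuration it is a direct substitution.
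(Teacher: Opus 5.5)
Your proposal is correct and follows the paper's route: the paper obtains Theorem~\ref{thm:classification} by combining Theorem~\ref{thm:reflection} with the one-, three- and five-real-point analyses, exactly as you do. Your re-check of the exceptional-chart polynomials is harmless but redundant, since those charts only feed into Theorem~\ref{thm:reflection}.

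On the distinctness bookkeeping you flag: having exactly one point on a reflection axis does not by itself make that axis unique, since every axis of the regular pentagon also meets exactly one vertex. You can close this in one of two ways. Either show that a second axis would generate a nontrivial rotational symmetry, which forces a regular pentagon because $E_\pm$ has no point at the origin. Or, more simply, use the rotation- and permutation-invariant values of $\Delta_5$ from Section~\ref{sec:discriminants}, which the exact bounds there show to be pairwise distinct; this also separates the centered square from $E_\pm$, which your list of invariants leaves implicit.
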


\begin{figure}[ht]
\centering
\begin{tikzpicture}[x=0.82cm,y=0.82cm]
\begin{scope}[xshift=0.0cm]
\draw[black!20,line width=0.25pt] (-1.65,0)--(1.65,0);
\draw[black!20,line width=0.25pt] (0,-1.65)--(0,1.65);
\fill (1.000000000,0.000000000) circle[radius=1.65pt];
\fill (0.309016994,0.951056516) circle[radius=1.65pt];
\fill (-0.809016994,0.587785252) circle[radius=1.65pt];
\fill (-0.809016994,-0.587785252) circle[radius=1.65pt];
\fill (0.309016994,-0.951056516) circle[radius=1.65pt];
\node[font=\footnotesize] at (0,-2) {Pentagon};
\end{scope}
\begin{scope}[xshift=3.198cm]
\draw[black!20,line width=0.25pt] (-1.65,0)--(1.65,0);
\draw[black!20,line width=0.25pt] (0,-1.65)--(0,1.65);
\fill (0.000000000,0.000000000) circle[radius=1.65pt];
\fill (1.118033989,0.000000000) circle[radius=1.65pt];
\fill (0.000000000,1.118033989) circle[radius=1.65pt];
\fill (-1.118033989,0.000000000) circle[radius=1.65pt];
\fill (-0.000000000,-1.118033989) circle[radius=1.65pt];
\node[font=\footnotesize] at (0,-2) {Centered square};
\end{scope}
\begin{scope}[xshift=6.396cm]
\draw[black!20,line width=0.25pt] (-1.65,0)--(1.65,0);
\draw[black!20,line width=0.25pt] (0,-1.65)--(0,1.65);
\fill (0.000000000,0.000000000) circle[radius=1.65pt];
\fill (-0.677813090,0.000000000) circle[radius=1.65pt];
\fill (0.677813090,0.000000000) circle[radius=1.65pt];
\fill (-1.428485007,0.000000000) circle[radius=1.65pt];
\fill (1.428485007,0.000000000) circle[radius=1.65pt];
\node[font=\footnotesize] at (0,-2) {Collinear};
\end{scope}
\begin{scope}[xshift=9.594cm]
\draw[black!20,line width=0.25pt] (-1.65,0)--(1.65,0);
\draw[black!20,line width=0.25pt] (0,-1.65)--(0,1.65);
\fill (0.261524624,0.000000000) circle[radius=1.65pt];
\fill (-0.810378565,0.688469649) circle[radius=1.65pt];
\fill (-0.810378565,-0.688469649) circle[radius=1.65pt];
\fill (0.679616253,0.934462577) circle[radius=1.65pt];
\fill (0.679616253,-0.934462577) circle[radius=1.65pt];
\node[font=\footnotesize] at (0,-2) {$E_-$};
\end{scope}
\begin{scope}[xshift=12.792cm]
\draw[black!20,line width=0.25pt] (-1.65,0)--(1.65,0);
\draw[black!20,line width=0.25pt] (0,-1.65)--(0,1.65);
\fill (1.101867858,0.000000000) circle[radius=1.65pt];
\fill (-0.375702598,1.241309257) circle[radius=1.65pt];
\fill (-0.375702598,-1.241309257) circle[radius=1.65pt];
\fill (-0.175231331,0.424542671) circle[radius=1.65pt];
\fill (-0.175231331,-0.424542671) circle[radius=1.65pt];
\node[font=\footnotesize] at (0,-2) {$E_+$};
\end{scope}
\end{tikzpicture}
\caption{The five stationary shapes. All panels use the same scale.}
\label{fig:configurations}
\end{figure}

\section{Discriminants and proof of the main theorem}\label{sec:discriminants}

We now evaluate $\Delta_5$ on the five stationary shapes.  For a monic polynomial $p$ with distinct roots,
\begin{equation}\label{eq:discprod}
 |\Disc(p)|=\prod_{i=1}^5|p'(z_i)|
 =\prod_{i<j}|z_i-z_j|^2.
\end{equation}

For the regular pentagon take $p(X)=X^5-1$.  Then $|p'(z_i)|=5$ for every root, so
\begin{equation}\label{eq:pentdisc}
 \Delta_{\mathrm{pent}}=5^5=3125.
\end{equation}

For the centered square, the moment condition gives outer radius $r^2=5/4$, hence
\[
 p(X)=X(X^4-r^4)=X^5-\frac{25}{16}X.
\]
Therefore
\begin{equation}\label{eq:squaredisc}
 \Delta_{\mathrm{sq}}
 =\frac{5^{10}}{2^{12}}
 =2384.185791015625.
\end{equation}

For the collinear polynomial \eqref{eq:collinearpoly}, direct exact evaluation gives
\begin{equation}\label{eq:colldisc}
 \Delta_{\mathrm{col}}
 =\frac{84375}{1024}
 =82.3974609375.
\end{equation}

It remains to evaluate the two nonclassical configurations.  For $p=(X-h)q_1q_2$, the discriminant product formula gives
\begin{equation}\label{eq:discfactor}
 \Disc(p)
 =\Disc(q_1)\Disc(q_2)
 \bigl(q_1(h)q_2(h)\bigr)^2
 \Res(q_1,q_2)^2.
\end{equation}
On the branch $Q_*(H)=0$, substitution of \eqref{eq:JH}--\eqref{eq:epsilonH} and reduction modulo $Q_*$ gives
\begin{align}
 \Disc(q_1)\Disc(q_2)
 &=-\frac{15}{5888}
 \bigl(610H^3-2305H^2+2750H-2777\bigr),\label{eq:factorA}\\
 q_1(h)q_2(h)
 &=\frac5{368}
 \bigl(70H^3-355H^2+670H+81\bigr),\label{eq:factorB}\\
 \Res(q_1,q_2)
 &=-\frac5{368}
 \bigl(130H^3-725H^2+1310H-901\bigr).\label{eq:factorC}
\end{align}
Their product reduces to
\begin{equation}\label{eq:DH}
 D(H)=\frac{84375}{385875968}
 \bigl(
 12854393-32980990H+30197665H^2-7632290H^3
 \bigr).
\end{equation}
On each isolating interval \eqref{eq:Hintervals}--\eqref{eq:Hintervals2}, the factors in \eqref{eq:factorA}--\eqref{eq:factorC} have the signs required by \eqref{eq:discfactor}, and exact interval arithmetic gives
\begin{equation}\label{eq:Dintervals}
 2347<D(H_-)<2349,
 \qquad
 801<D(H_+)<803.
\end{equation}
In particular both values are strictly below $3125$.  For orientation,
\[
 D(H_-)=2347.838167527395\ldots,
 \qquad
 D(H_+)=801.559584831220\ldots.
\]

We summarize the exact comparison in
Table~\ref{tab:values}.

\begin{table}[ht]
\centering
\caption{The stationary shapes at quadratic moment five. Decimal values
are for orientation only; the comparison uses the exact bounds.}
\label{tab:values}
\begin{tabular}{@{}lcc@{}}
\toprule
Configuration & Discriminant absolute value & Approximate value\\
\midrule
Regular pentagon & $5^5$ & $3125$\\
Centered square & $5^{10}/2^{12}$ & $2384.185791$\\
Collinear & $84375/1024$ & $82.397461$\\
$E_-$ & $D(H_-)$ & $2347.838168$\\
$E_+$ & $D(H_+)$ & $801.559585$\\
\bottomrule
\end{tabular}
\end{table}

\begin{proof}[Proof of Theorem~\ref{thm:main}]
By Lemma~\ref{lem:maxexists}, a global maximizer exists and is collision-free.  Proposition~\ref{prop:equilibrium} shows that it satisfies \eqref{eq:equilibrium}.  Theorem~\ref{thm:classification} therefore restricts it to the five configurations listed above.  Equations \eqref{eq:pentdisc}--\eqref{eq:Dintervals} show that the regular pentagon is the unique maximizing shape.  Its moment is $5$, so its circumradius is $1$.  This gives \eqref{eq:equalitycase} and completes the proof.
\end{proof}

\section{Connections and consequences}\label{sec:connections}

The proof identifies the same five-point geometry in three standard settings.  We record the precise statements.

\subsection{Gaussian weighted Fekete points}

Weighted Fekete problems maximize a Vandermonde product multiplied by an external weight; see Saff and Totik~\cite{SaffTotik1997}.  Consider the Gaussian functional
\begin{equation}\label{eq:weightedV}
 \mathcal G(z)
 =e^{-Q(z)}\Delta_5(z),
 \qquad z\in\C^5.
\end{equation}
With the convention that the squared weighted Vandermonde is
\[
 \Delta_5(z)\prod_{j=1}^5w(z_j)^{2(5-1)},
\]
this corresponds to the Gaussian weight
\[
 w(z)=e^{-|z|^2/8}.
\]

\begin{corollary}\label{cor:fekete}
The global maximizers of \eqref{eq:weightedV} are, up to permutation and common rotation,
\begin{equation}\label{eq:weightedpent}
 z_j=\sqrt2\,e^{2\pi\ii(j-1)/5},
 \qquad j=1,\ldots,5.
\end{equation}
The maximum value is
\[
 2^{10}5^5e^{-10}.
\]
\end{corollary}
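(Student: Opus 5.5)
The plan is to reduce Corollary~\ref{cor:fekete} to Theorem~\ref{thm:main} by a radial/angular splitting, using the scaling identity \eqref{eq:scaleenergy}. Maximizing $\mathcal G=e^{-Q}\Delta_5$ is the same as minimizing $\Phi=Q-\log\Delta_5$ from \eqref{eq:gaussianenergy}. Since $\Delta_5$ vanishes on collisions and at $z=0$, we may restrict to collision-free $z\ne0$. Write $z=rv$ with $Q(v)=5$ and $r>0$. Homogeneity gives $\Delta_5(rv)=r^{20}\Delta_5(v)$, because there are $10$ pairs, each contributing $r^2$. Hence
\[
 \log\mathcal G(rv)=-5r^2+20\log r+\log\Delta_5(v).
\]
The two variables now separate.

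First we maximize in $r$. The function $-5r^2+10\log r^2$ has derivative $-10r+20/r$ in $r$. It is therefore uniquely maximized at $r^2=2$, with value $-10+10\log2$. Equivalently, we can invoke \eqref{eq:scaleenergy} directly after rescaling $v$ to $Q=10$, which shows that the optimal total moment is $Q(z)=10$.

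Second, we maximize in $v$. By Theorem~\ref{thm:main}, $\Delta_5(v)\le5^5$, with equality exactly for $v_j=e^{\ii\theta}\omega^{j-1}$ up to permutation. Combining the two steps gives
\[
 \mathcal G(z)\le e^{-10}2^{10}5^5.
\]
Equality holds if and only if $r^2=2$ and $v$ is a regular pentagon of circumradius $1$, that is, $z_j=\sqrt2\,e^{\ii\theta}\omega^{j-1}$. This matches \eqref{eq:weightedpent} up to common rotation and permutation.

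Finally, the weight convention should be checked: $\prod_j w(z_j)^{8}=\prod_j e^{-|z_j|^2}=e^{-Q(z)}$, so the squared weighted Vandermonde is indeed $\mathcal G$. The maximum exists because $\mathcal G\to0$ as $Q\to\infty$, although the splitting argument gives the supremum and the maximizers directly anyway.

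No step is a real obstacle, since all the difficulty is in Theorem~\ref{thm:main}. The only points needing care are the equality cases: uniqueness of the optimal $r$ requires strict concavity of the radial function, and points with $\Delta_5=0$ must be excluded, which is done above.
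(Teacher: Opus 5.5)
Your proposal is correct and takes essentially the paper's route: the paper also maximizes $\mathcal G(rz)=r^{20}e^{-r^2Q(z)}\Delta_5(z)$ along rays, places the optimum at $r^2Q(z)=10$, and rescales by $1/\sqrt2$ so that Theorem~\ref{thm:main} applies on $Q=5$. Your parametrization $z=rv$ with $Q(v)=5$ fixes that normalization first, and it treats the equality cases more explicitly than the paper: you use strict unimodality of the radial factor and exclude configurations with $\Delta_5=0$.
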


\begin{proof}
For a nonzero configuration $z$ and $r>0$,
\[
 \mathcal G(rz)=r^{20}e^{-r^2Q(z)}\Delta_5(z).
\]
Along each ray this is maximized at $r^2Q(z)=10$.  Rescale the resulting configuration by $1/\sqrt2$; it then has moment $5$, and Theorem~\ref{thm:main} applies.  Conversely \eqref{eq:weightedpent} attains the stated value.
\end{proof}

\subsection{The complex Ginibre ensemble}

For the standard complex Ginibre ensemble, the joint density of the five eigenvalues is, up to normalization,
\begin{equation}\label{eq:ginibre-density}
 \exp\left(-\sum_{j=1}^5|z_j|^2\right)
 \prod_{i<j}|z_i-z_j|^2;
\end{equation}
see Ginibre~\cite{Ginibre1965}.  Thus \eqref{eq:ginibre-density} is exactly the functional \eqref{eq:weightedV}.

\begin{corollary}\label{cor:ginibre}
The symmetric joint density \eqref{eq:ginibre-density} on the labeled space
$\C^5$ is maximized precisely by the labeled regular pentagons of radius
$\sqrt2$ centered at the origin.  Equivalently, its pushforward to the
unordered configuration space $\C^5/S_5$, with the induced quotient measure,
has the regular-pentagon orbit as its unique global mode up to common rotation.
\end{corollary}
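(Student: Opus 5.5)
The plan is to reduce Corollary~\ref{cor:ginibre} directly to Corollary~\ref{cor:fekete}. The density \eqref{eq:ginibre-density} is literally the functional $\mathcal G$ in \eqref{eq:weightedV}. Hence on the labeled space $\C^5$ its global maximizers are exactly the configurations listed in Corollary~\ref{cor:fekete}. These are the labeled pentagons $z_{\sigma(j)}=\sqrt2\,e^{\ii\theta}\omega^{j-1}$ for some permutation $\sigma$ and some $\theta\in\R$. This proves the first sentence. In writing it out, I will note that ``up to permutation and common rotation'' in Corollary~\ref{cor:fekete} means precisely this family of labeled points. I will also note that the maximum value $2^{10}5^5e^{-10}$ is attained by every member of the family and by no other point.

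For the unordered statement, I will first make the quotient density precise. Let $\pi:\C^5\to\C^5/S_5$ be the quotient map, and push forward the Lebesgue measure restricted to the collision-free open set, where $S_5$ acts freely. On that set $\pi$ is a $120$-sheeted covering, and the pushforward has density $120\,\mathcal G$ with respect to the induced quotient measure. This uses the $S_5$-invariance of $\mathcal G$, which holds because both $Q$ and $\Delta_5$ are symmetric. The collision locus has measure zero and $\mathcal G$ vanishes there, so it cannot host a mode. Consequently a point of $\C^5/S_5$ maximizes the pushforward density if and only if one, equivalently every, preimage maximizes $\mathcal G$. By the first part, the maximizing set is the single $S_5$-orbit family $\{\pi(\sqrt2 e^{\ii\theta}\omega^{j-1})_j:\theta\in\R\}$. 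This family is one orbit under common rotation, which gives uniqueness of the mode up to rotation.

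The only delicate point is interpretive rather than computational. A mode of a measure depends on the choice of reference density. I will therefore state explicitly that we use the density relative to the quotient of Lebesgue measure. Since the covering is locally an isometry, no Jacobian factor appears and the maximizers are unchanged. Nothing beyond Corollary~\ref{cor:fekete} and Theorem~\ref{thm:main} is needed.
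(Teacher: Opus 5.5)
Your proposal is correct and follows the paper's own (implicit) argument: the paper simply observes that \eqref{eq:ginibre-density} is exactly the functional $\mathcal G$ and invokes Corollary~\ref{cor:fekete}, and your covering argument makes the passage to $\C^5/S_5$ explicit. One cosmetic point: if the reference measure is the pushforward of Lebesgue measure itself, the quotient density is $\mathcal G$ with no factor $120$ (that factor appears only relative to the sheetwise local measure); in either case a positive constant does not affect where the mode is.
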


This finite-particle statement is distinct from the usual large-$N$ circular-law description: it gives the exact most likely configuration at $N=5$.

\subsection{Five identical point vortices}

After translating the center of vorticity to the origin and choosing units and a
rotating frame, relative equilibria of identical planar point vortices satisfy
\begin{equation}\label{eq:vortex-general}
 \sum_{j\ne i}\frac1{z_i-z_j}=\lambda\overline z_i,
 \qquad \lambda>0;
\end{equation}
see Aref~\cite{Aref2007}.  Positive scaling changes $\lambda$ but not the shape.  Theorem~\ref{thm:classification} therefore gives an independent exact classification for five identical vortices.

\begin{corollary}\label{cor:vortex}
After translating the center of vorticity to the origin, and modulo positive
scaling, rotation, reflection, and permutation, five distinct identical
vortices have exactly five relative-equilibrium shapes: the regular pentagon,
the centered square, the collinear configuration, and the two nonclassical
configurations determined by the two real roots of \eqref{eq:QH}.
\end{corollary}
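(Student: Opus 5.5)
The plan is to deduce Corollary~\ref{cor:vortex} directly from Theorem~\ref{thm:classification} by normalizing \eqref{eq:vortex-general} to \eqref{eq:equilibrium}.

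\emph{Normalizing $\lambda$.} Take a relative equilibrium of five distinct identical vortices with center of vorticity at the origin, satisfying \eqref{eq:vortex-general} for some $\lambda>0$. For $s>0$ put $w_i=sz_i$. Then
\[
 \sum_{j\ne i}\frac1{w_i-w_j}=s^{-1}\lambda\overline z_i=s^{-2}\lambda\overline w_i .
\]
Choosing $s=\sqrt{\lambda/2}$ turns \eqref{eq:vortex-general} into \eqref{eq:equilibrium}. The points $w_i$ are still distinct, so every relative-equilibrium shape is, after positive scaling, a collision-free physical solution of \eqref{eq:equilibrium}. Conversely, every solution of \eqref{eq:equilibrium} is a relative equilibrium with $\lambda=2$. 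Summing \eqref{eq:equilibrium} over $i$ gives \eqref{eq:centroid}, so such a solution automatically has its center of vorticity at the origin. The two classes therefore coincide modulo positive scaling.

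\emph{Quotient by symmetries.} Theorem~\ref{thm:classification} lists the solutions of \eqref{eq:equilibrium} up to rotation and permutation. Quotienting further by reflection can only merge shapes, never create new ones. So there are at most five shapes: the regular pentagon, the centered square, the collinear configuration, and $E_\pm$ coming from the two real roots of \eqref{eq:QH}.

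\emph{Distinctness.} It remains to check that these five shapes are pairwise inequivalent even modulo scaling, rotation, reflection and permutation. Positive scaling does not enter here: by \eqref{eq:momentsxy}, every solution of \eqref{eq:equilibrium} has quadratic moment $5$, so the scale is already fixed. Under the remaining symmetries, $\Delta_5$ is invariant. Table~\ref{tab:values} gives five pairwise distinct values of $\Delta_5$: $3125$, $5^{10}/2^{12}$, $84375/1024$, and $D(H_\pm)$. For $D(H_\pm)$, the rigorous bounds \eqref{eq:Dintervals} separate them from each other and from the three exact values. Hence no two of the five shapes are equivalent, and there are exactly five.

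The only point needing care is this distinctness step, in particular that $E_-$ and $E_+$ are not equivalent to each other. The invariant $\Delta_5$ together with the interval bounds \eqref{eq:Dintervals} settles it.
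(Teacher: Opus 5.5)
Your proposal is correct and takes the same route as the paper, which likewise rescales \eqref{eq:vortex-general} to $\lambda=2$ and reads the corollary off Theorem~\ref{thm:classification}. Your explicit distinctness step is a useful detail that the paper leaves implicit: the scale is fixed by $\sum_i|z_i|^2=5$, and the five shapes are then separated by the rotation-, reflection- and permutation-invariant $\Delta_5$, using the exact values and the bounds \eqref{eq:Dintervals}; alternatively, every listed shape is reflection symmetric by Theorem~\ref{thm:reflection}, so the extra reflection quotient merges nothing.
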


The existence of five physical classes is consistent with the exact computational classification of Faug\`ere and Svartz~\cite{FaugereSvartz2012}.  The present proof isolates a comparatively small invariant calculation specialized to $N=5$ and, in addition, proves the reflection symmetry of every collision-free physical solution directly.

\appendix

\section{Exact elimination in the unrestricted chart}\label{app:generic}

For reproducibility we record the three primitive polynomials used in Proposition~\ref{prop:genericelim}.  They are obtained directly from the resultant \eqref{eq:resultant}, the coefficient equations \eqref{eq:threeeqs}, and the substitutions $a=1$, $d=bk$, $B=b^2$:
\begin{align}
F_1={}&27B^2c+3375B^2k^2-135B^2k-2700Bc^2k-144Bc^2\nonumber\\
&+12500Bck^3+1500Bck^2+495Bck+4Bc\nonumber\\
&-5625Bk^3+1125Bk^2-36Bk+320c^4+1600c^3k\nonumber\\
&+224c^3-960c^2k-12c^2+108ck,\label{eq:F1}\\[1ex]
F_2={}&54B^3+2250B^2ck-369B^2c+6250B^2k^3-3375B^2k^2\nonumber\\
&+195B^2k+8B^2-800Bc^3-2800Bc^2k+712Bc^2\nonumber\\
&+4000Bck^2+220Bck-44Bc-900Bk^2+36Bk\nonumber\\
&+1280c^4-512c^3+48c^2,\label{eq:F2}\\[1ex]
F_3={}&-675B^2k+351B^2+180Bc^2+7500Bck^2+15600Bck\nonumber\\
&-2067Bc+37500Bk^3-37125Bk^2+5085Bk+48B\nonumber\\
&-2000c^3k-5440c^3+13800c^2k+1968c^2\nonumber\\
&-5805ck-216c+648k.\label{eq:F3}
\end{align}

The verification proceeds as follows.  First compute
\[
 \widetilde\delta(B,c,k)
 =\Disc\bigl(X^5+X^3+bX^2+cX+bk\bigr)
 \quad\text{with }B=b^2,
\]
which is even in $b$.  Compute a Gr\"obner basis over $\Q$ of
\begin{equation}\label{eq:rabinowitsch}
 \langle F_1,F_2,F_3,sB\widetilde\delta-1\rangle
 \subset\Q[s,k,c,B]
\end{equation}
with graded reverse lexicographic order, and convert it to lexicographic order by FGLM.  The resulting zero-dimensional basis has four elements.  Its $B$-eliminant is the monic scalar multiple of $P_1P_2$ from \eqref{eq:P1}--\eqref{eq:P2}; the other two $s$-free elements express $c$ and $k$ as rational polynomials in $B$.  Reduction modulo $P_1$ and $P_2$ gives \eqref{eq:C1}--\eqref{eq:K2}.  The accompanying script asserts all these equalities by exact polynomial division.

For the dual-invariant identities \eqref{eq:dualinvariants}, form the numerators after substituting $d=bk$ and convert their even powers of $b$ to $B$.  Each has zero normal form modulo the same saturated basis.  No numerical root approximation enters this computation.

For the three exceptional charts, the exact saturated lexicographic bases are
\begin{align*}
a=1,\ b=0:
&\quad
\left\{s-\frac{3125}{27},\ d,\ c-\frac3{20}\right\},\\
a=0,\ b=1:
&\quad
\{1\}.
\end{align*}
For $a=b=0$, the $s$-free elimination ideal is generated by $cd$.

\section{Exact elimination in the reflection-symmetric cases}\label{app:symmetric}

For the one-real-point parametrization \eqref{eq:factor12}--\eqref{eq:vepsilon}, the lexicographic Gr\"obner basis in the variable order $\varepsilon,\eta,h$ is
\begin{align*}
&\frac1{2208}\bigl(552\eta^2+2208\varepsilon^2-2450h^{10}+14875h^8-28975h^6
 +12335h^4+6975h^2-5520\bigr),\\
&\frac12\bigl(2\eta\varepsilon-5h^3+5h\bigr),\\
&-\frac{h}{138}\bigl(780\eta h^9-4900\eta h^7+10025\eta h^5-6251\eta h^3+346\eta h-138\varepsilon\bigr),\\
&\frac{\eta}{552}\bigl(552\eta^2+4350h^{10}-27725h^8+59425h^6-45145h^4
 +11855h^2-5520\bigr),\\
&\frac h{276}\bigl(276\eta^2+2450h^8-14875h^6+28975h^4-16475h^2-1455\bigr),\\
&\frac1{10}h(h-1)(h+1)
 \bigl(10h^8-65h^6+140h^4-97h^2+6\bigr).
\end{align*}
This basis yields \eqref{eq:etaepsilon-rel}--\eqref{eq:elim-h}.  On the nonclassical branch, substituting \eqref{eq:JH}--\eqref{eq:epsilonH} in the original five remainder equations and reducing their numerators modulo
\[
 \langle \eta^2-J(H),\ h^2-H,\ Q_*(H)\rangle
\]
gives zero in every case.

For the three-real-point parametrization \eqref{eq:32factor}--\eqref{eq:beta32}, the lexicographic basis in $C,v,u$ is
\begin{align*}
&-\frac1{228}\bigl(-228C+130u^7-625u^5+1036u^3+359u\bigr),\\
&\frac1{2736}\bigl(1690u^8-8125u^6+13810u^4+3413u^2+2736v-3420\bigr),\\
&\frac u{10}\bigl(10u^8-55u^6+115u^4-37u^2-9\bigr).
\end{align*}
The formula \eqref{eq:V32} has zero normal form after adjoining the nonzero branch equation.

The Sturm counts and all sign checks use rational endpoints.  In particular, the intervals \eqref{eq:Hintervals}--\eqref{eq:Hintervals2} each contain exactly one root of $Q_*$, while $Q_*$ has exactly two real roots in total.  Horner interval evaluation over $\Q$ proves $J(H)>0$, the negativity condition \eqref{eq:discquadneg}, positivity of the three factors in \eqref{eq:discfactor} after their prescribed signs are accounted for, and the bounds \eqref{eq:Dintervals}.  Likewise, the polynomial in \eqref{eq:Q32} has exactly one positive root in $(0.607600,0.607601)$ and \eqref{eq:V32} is negative on that interval.

\begin{remark}[Reproducibility]\label{rem:repro}
The ancillary Python script supplied with the paper requires Python~3 and SymPy~1.14.  It generates the resultant and all remainder equations from the definitions rather than reading precomputed coefficients.  Assertions are performed in exact rational arithmetic.  Floating-point evaluation is used only at the end to print the illustrative coordinates \eqref{eq:Eminus}--\eqref{eq:Eplus}.  A successful run ends with the message \texttt{ALL EXACT CHECKS PASSED}.
\end{remark}

\section{The four-point case}\label{app:n4}

For completeness, we give a short proof of the four-point case.  The argument is independent of the five-point elimination above.  Its main observation is that a Hadamard change of variables converts the four-point Vandermonde into a three-point Vandermonde in three squared coordinates.

\begin{lemma}\label{lem:threepoint}
For arbitrary $u,v,w\in\C$,
\begin{equation}\label{eq:threepoint}
 |(u-v)(v-w)(w-u)|
 \le \frac14\bigl(|u|+|v|+|w|\bigr)^3.
\end{equation}
If the right-hand side is nonzero, equality holds if and only if, after a permutation,
\begin{equation}\label{eq:threepoint-eq}
 u=0,\qquad v=-w,\qquad |v|=|w|.
\end{equation}
\end{lemma}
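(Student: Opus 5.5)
The inequality is homogeneous of degree three and invariant under common rotation. So I would normalize $\sigma:=|u|+|v|+|w|=1$ and maximize the continuous function $T(u,v,w)=|(u-v)(v-w)(w-u)|$ on the compact set $\{\sigma=1\}$. A maximizer exists, and $T>0$ there, since for example $(0,\tfrac12,-\tfrac12)$ gives $T=\tfrac14$. The proof then splits according to whether some coordinate of the maximizer vanishes. This matters because $\sigma$ fails to be differentiable exactly where a coordinate is $0$.

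\emph{Boundary case.} Suppose, after a permutation, $u=0$. Then
\[
T=|v|\,|w|\,|v-w|\le |v|\,|w|\,(|v|+|w|)\le \tfrac14(|v|+|w|)^3=\tfrac14,
\]
by the triangle inequality and $|v||w|\le\tfrac14(|v|+|w|)^2$. Equality in the first step forces $v$ and $w$ to be antiparallel, and equality in the second forces $|v|=|w|$. Together these give exactly \eqref{eq:threepoint-eq}. The remaining task is to show that no maximizer has all three coordinates nonzero, or at least that any such critical point has value $<\tfrac14$.

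\emph{Interior case.} If $u,v,w\ne0$, then $\log T$ and $\sigma$ are smooth near the maximizer, and I would use Lagrange multipliers with Wirtinger derivatives, exactly as in Proposition~\ref{prop:equilibrium}. Since $\partial|z|/\partial\overline z=z/(2|z|)$, we get a real $\lambda$ with
\[
\frac1{z_i-z_j}+\frac1{z_i-z_k}=\lambda\,\frac{\overline{z_i}}{|z_i|}
\qquad\text{for each } \{i,j,k\}=\{1,2,3\},\ (z_1,z_2,z_3)=(u,v,w).
\]
Multiplying by $z_i$ and summing gives $3=\lambda\sigma$, so $\lambda=3$. Summing the three equations makes the left side cancel in pairs, so $\sum_i \overline{z_i}/|z_i|=0$. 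Three unit vectors summing to zero are at mutual angles $2\pi/3$. After a rotation, $z_i=\rho_i\omega^{i-1}$ with $\rho_i>0$, $\omega=e^{2\pi\ii/3}$ and $\rho_1+\rho_2+\rho_3=1$.

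Substituting this into the three equations gives, for each $i$, the condition that $z_i\bigl(\frac1{z_i-z_j}+\frac1{z_i-z_k}\bigr)=3\rho_i$ is real and equal to $3\rho_i$. This is a small polynomial system in $\rho_1,\rho_2,\rho_3$. I expect its only positive solution to be $\rho_1=\rho_2=\rho_3=\tfrac13$, an equilateral triangle. One checks that this point satisfies the equations, since $z(\frac1{z-z\omega}+\frac1{z-z\omega^2})=1$. Its value is $T=(\sqrt3/3)^3=3^{-3/2}\approx0.192<\tfrac14$. Hence an interior critical point cannot be a maximizer. The maximizer therefore lies in the boundary case, which gives both the bound and the equality characterization.

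The main obstacle is this last algebraic step: showing that the $120^\circ$-spread stationarity system forces equal radii, or more weakly that every solution has $T<\tfrac14$. I would first take the imaginary parts of the three equations, which should give homogeneous relations such as $\rho_j\ne\rho_k \Rightarrow$ a sign contradiction, obtained from $\mathrm{Im}\,\frac{z_i}{z_i-z_j}$ having a definite sign determined by $\rho_j-\rho_k$. If that does not close cleanly, I would instead eliminate by resultants in $\rho_2/\rho_1$ and $\rho_3/\rho_1$, in the same spirit as the exact computations elsewhere in the paper.
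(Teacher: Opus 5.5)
Your strategy works and differs from the paper's. The one step you leave open closes along the lines of your first suggestion, but through a factorization rather than a sign contradiction. With $z_1=\rho_1$, $z_2=\rho_2\omega$, $z_3=\rho_3\omega^2$ one finds
\[
\mathrm{Im}\frac{z_1}{z_1-z_2}=\frac{\sqrt3}{2}\,\frac{\rho_1\rho_2}{\rho_1^2+\rho_1\rho_2+\rho_2^2},
\qquad
\mathrm{Im}\frac{z_1}{z_1-z_3}=-\frac{\sqrt3}{2}\,\frac{\rho_1\rho_3}{\rho_1^2+\rho_1\rho_3+\rho_3^2}.
\]
These two terms always have opposite signs, so no contradiction comes from signs alone. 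The imaginary part of the first equation just says $f(\rho_2)=f(\rho_3)$ for $f(s)=s/(\rho_1^2+\rho_1 s+s^2)$. Since $f(s)=f(\rho_1^2/s)$, this is the two-branch condition $(\rho_2-\rho_3)(\rho_1^2-\rho_2\rho_3)=0$, and the other two equations give its cyclic analogues.

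Let $\rho_i$ be the largest radius. Either branch of the $i$-th relation forces the other two radii to a common value $s\le\rho_i$. The relation for one of those indices then reduces to $s(s-\rho_i)^2=0$. So the equilateral triangle is the only interior critical point, and the real parts are not even needed. The rest of your argument goes through: the value there is $3^{-3/2}<\tfrac14$, so every maximizer and every equality point has a zero coordinate. One small correction: putting the three directions into the form $\rho_i\omega^{i-1}$ needs a relabeling as well as a rotation, to fix orientation; this is harmless by symmetry.

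The paper avoids compactness and multipliers altogether.
\begin{itemize}
\item It introduces the Fermat--Torricelli value $m=\min_P(PA+PB+PC)$ and notes $m\le|u|+|v|+|w|$, since the origin is an admissible point.
\item It then proves the stronger, translation-invariant inequality $abc\le m^3/4$ by three cases: collinear points, an angle of at least $120^\circ$, and an interior Fermat point.
\item In the last case, the identity $a^2b^2c^2=p^2q^2-q^3-p^3r$ together with Schur's inequality gives $a^2b^2c^2\le\frac{3}{64}p^6$.
\end{itemize}
Your stationarity condition $\sum_i z_i/|z_i|=0$ says exactly that the origin is the interior Fermat point of the triangle, with your $\rho_i$ playing the role of the paper's $x,y,z$. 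So your interior case is the paper's third case. The paper bounds that whole family at once by Schur and gets a geometric inequality of independent interest along the way. You only need the value at a single stationary point, at the cost of the existence and Lagrange setup and the small elimination above.
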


\begin{proof}
Regard $u,v,w$ as points $A,B,C$ in the Euclidean plane and put
\[
 a=|v-w|,\qquad b=|w-u|,\qquad c=|u-v|.
\]
Let
\[
 m=\min_{P\in\C}(PA+PB+PC).
\]
Since the origin is an admissible point,
\begin{equation}\label{eq:m-bound}
 m\le |u|+|v|+|w|.
\end{equation}
We prove
\begin{equation}\label{eq:abc-m}
 abc\le \frac{m^3}{4},
\end{equation}
with equality only when $A,B,C$ are collinear, one of them is the midpoint of the other two, and that midpoint is a minimizer in the definition of $m$.

Suppose first that $A,B,C$ are collinear, with $A$ between $B$ and $C$.  Then $a=b+c$ and $m=a$, whence
\[
 abc=a\,bc\le a\frac{(b+c)^2}{4}=\frac{a^3}{4}=\frac{m^3}{4}.
\]
Equality holds exactly when $b=c$.

Now suppose that the triangle is nondegenerate.  If one angle is at least $120^\circ$, say $\angle A\ge120^\circ$, the geometric median is $A$, so $m=b+c$.  Since $a<b+c$,
\[
 abc<(b+c)bc\le \frac{(b+c)^3}{4}=\frac{m^3}{4}.
\]
It remains to consider the case in which all angles are less than $120^\circ$.  Then the geometric median $F$ lies in the interior and
\[
 \angle AFB=\angle BFC=\angle CFA=120^\circ.
\]
Write
\[
 x=FA,\qquad y=FB,\qquad z=FC,
 \qquad p=x+y+z=m,
\]
and set
\[
 q=xy+yz+zx,\qquad r=xyz.
\]
The cosine rule gives
\[
 a^2=y^2+yz+z^2,\qquad
 b^2=z^2+zx+x^2,\qquad
 c^2=x^2+xy+y^2.
\]
A direct expansion yields
\begin{equation}\label{eq:fermat-identity}
 a^2b^2c^2=p^2q^2-q^3-p^3r.
\end{equation}
Put $t=q/p^2$ and $\rho=r/p^3$.  Then $0<t\le1/3$, and \eqref{eq:fermat-identity} becomes
\begin{equation}\label{eq:fermat-normal}
 \frac{a^2b^2c^2}{p^6}=t^2(1-t)-\rho.
\end{equation}
If $0<t\le1/4$, then
\[
 t^2(1-t)-\rho\le t^2(1-t)\le \frac{3}{64}<\frac1{16}.
\]
If $1/4\le t\le1/3$, Schur's inequality $p^3+9r\ge4pq$ gives
\[
 \rho\ge\frac{4t-1}{9}.
\]
Hence
\[
 \frac{a^2b^2c^2}{p^6}
 \le g(t):=t^2(1-t)-\frac{4t-1}{9}.
\]
Moreover
\[
 g'(t)=2t-3t^2-\frac49<0,
\]
because $27t^2-18t+4>0$ for every real $t$.  Therefore
\[
 g(t)\le g(1/4)=\frac{3}{64}<\frac1{16}.
\]
Thus \eqref{eq:abc-m} is strict for every nondegenerate triangle.

Combining \eqref{eq:abc-m} with \eqref{eq:m-bound} proves \eqref{eq:threepoint}.  If equality holds and the product is nonzero, the preceding discussion forces the collinear equality case: one point, say $A$, is the midpoint of the other two.  Equality in \eqref{eq:m-bound} then forces the origin itself to be the geometric median, hence $A=0$.  Thus $u=0$ and $v=-w$ with $|v|=|w|$.  The converse is immediate.
\end{proof}

\begin{theorem}[The four-point case]\label{thm:n4}
If $z_1,z_2,z_3,z_4\in\C$ satisfy
\[
 \sum_{j=1}^4|z_j|^2=4,
\]
then
\begin{equation}\label{eq:n4-main}
 \prod_{1\le i<j\le4}|z_i-z_j|^2\le4^4.
\end{equation}
Equality holds if and only if, after a permutation,
\[
 \{z_1,z_2,z_3,z_4\}
 =\{e^{\ii\theta},\ii e^{\ii\theta},-e^{\ii\theta},-\ii e^{\ii\theta}\}
\]
for some $\theta\in\R$.
\end{theorem}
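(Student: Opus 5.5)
The plan is to use the Hadamard change of variables announced in the appendix introduction and reduce the four-point product to Lemma~\ref{lem:threepoint}.

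\textbf{Step 1: centering.} Replacing $z_j$ by $z_j-\bar z$, where $\bar z=\frac14\sum_j z_j$, leaves all differences unchanged and does not increase $\sum|z_j|^2$. Since the product is homogeneous of degree $12$, scaling back up to moment $4$ can only increase it. We may therefore assume $\sum_j z_j=0$. In the equality case the centroid must already be zero.

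\textbf{Step 2: Hadamard variables.} Set
\[
 \alpha=\tfrac12(z_1+z_2-z_3-z_4),\quad \beta=\tfrac12(z_1-z_2+z_3-z_4),\quad \gamma=\tfrac12(z_1-z_2-z_3+z_4).
\]
The map $(z_1,\dots,z_4)\mapsto(\tfrac12\sum z_j,\alpha,\beta,\gamma)$ is unitary (a scaled Hadamard matrix). With zero centroid this gives $|\alpha|^2+|\beta|^2+|\gamma|^2=\sum|z_j|^2=4$. Inverting, $z_1=\tfrac12(\alpha+\beta+\gamma)$, and similarly for the others with sign patterns. The pairwise differences then pair up: $z_1-z_2=\beta+\gamma$ and $z_3-z_4=\beta-\gamma$, so $(z_1-z_2)(z_3-z_4)=\beta^2-\gamma^2$. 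Likewise $(z_1-z_3)(z_2-z_4)=\alpha^2-\gamma^2$ and $(z_1-z_4)(z_2-z_3)=\alpha^2-\beta^2$, up to signs that I will check.

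Put $u=\alpha^2$, $v=\beta^2$, $w=\gamma^2$. Then
\[
 \prod_{i<j}|z_i-z_j|=|(u-v)(v-w)(w-u)|,\qquad |u|+|v|+|w|=4.
\]
Lemma~\ref{lem:threepoint} now gives $\prod_{i<j}|z_i-z_j|\le\frac14\cdot4^3=16$. Squaring yields $\prod_{i<j}|z_i-z_j|^2\le 4^4$, which is \eqref{eq:n4-main}.

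\textbf{Step 3: equality.} If equality holds, then equality holds in Step 1, so the centroid is zero. Equality in Lemma~\ref{lem:threepoint} forces, after permuting, $u=0$ and $v=-w$. (A permutation of $\{u,v,w\}$ corresponds to a permutation of the $z_j$.) Since $|u|+|v|+|w|=4$, we get $|v|=|w|=2$.

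So $\alpha=0$, $\beta^2=-\gamma^2$, and $\gamma=\pm\ii\beta$ with $|\beta|=\sqrt2$. Substituting back, the $z_j$ are $\tfrac12(\pm\beta\pm\gamma)$. These are $\tfrac12\beta(1+\ii)$ multiplied by $1,\ii,-1,-\ii$, which form a square of modulus $1$. This is the claimed configuration, and the converse is a direct check.

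\textbf{Main obstacle.} The only real work is verifying the sign bookkeeping in the three product identities of Step 2, and confirming that every permutation of $(u,v,w)$ is realized by relabeling the $z_j$. The inequality itself is supplied entirely by Lemma~\ref{lem:threepoint}.
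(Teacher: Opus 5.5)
Your proof is correct and follows the paper's argument: the same Hadamard coordinates, the same factorization $\prod_{i<j}(z_i-z_j)=\pm(\alpha^2-\beta^2)(\alpha^2-\gamma^2)(\beta^2-\gamma^2)$ (your three pairings check out exactly), and Lemma~\ref{lem:threepoint} applied to the squares. The only cosmetic difference is that you center first by a rescaling argument, whereas the paper keeps the coordinate $s=\frac12\sum_j z_j$ and uses $|x|^2+|y|^2+|t|^2\le 4$ directly, with equality forcing $s=0$. Your observation that permuting $z_2,z_3,z_4$ permutes $\alpha,\beta,\gamma$ justifies the paper's ``after permuting'' step.
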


\begin{proof}
Introduce the Hadamard coordinates
\begin{align*}
 s&=\frac{z_1+z_2+z_3+z_4}{2},&
 x&=\frac{z_1+z_2-z_3-z_4}{2},\\
 y&=\frac{z_1-z_2+z_3-z_4}{2},&
 t&=\frac{z_1-z_2-z_3+z_4}{2}.
\end{align*}
The Hadamard matrix divided by $2$ is unitary, and therefore
\begin{equation}\label{eq:n4-parseval}
 |s|^2+|x|^2+|y|^2+|t|^2
 =\sum_{j=1}^4|z_j|^2=4.
\end{equation}
The inverse transformation is
\begin{align*}
 z_1&=\frac{s+x+y+t}{2},&
 z_2&=\frac{s+x-y-t}{2},\\
 z_3&=\frac{s-x+y-t}{2},&
 z_4&=\frac{s-x-y+t}{2}.
\end{align*}
Consequently,
\[
\begin{array}{lll}
 z_1-z_2=y+t,&z_1-z_3=x+t,&z_1-z_4=x+y,\\
 z_2-z_3=x-y,&z_2-z_4=x-t,&z_3-z_4=y-t,
\end{array}
\]
and hence
\begin{equation}\label{eq:n4-factor}
 \prod_{i<j}(z_i-z_j)
 =(x^2-y^2)(x^2-t^2)(y^2-t^2).
\end{equation}
Apply Lemma~\ref{lem:threepoint} to $u=x^2$, $v=y^2$, and $w=t^2$.  By \eqref{eq:n4-parseval},
\begin{align*}
 \left|\prod_{i<j}(z_i-z_j)\right|
 &\le\frac14\bigl(|x|^2+|y|^2+|t|^2\bigr)^3\\
 &\le\frac14\,4^3=16.
\end{align*}
Squaring gives \eqref{eq:n4-main}.

Suppose equality holds.  Equality in the second inequality above and \eqref{eq:n4-parseval} imply $s=0$.  Equality in Lemma~\ref{lem:threepoint}, after permuting $x,y,t$ if necessary, gives
\[
 x=0,\qquad y^2=-t^2,
 \qquad |y|^2=|t|^2=2.
\]
Thus $y=\pm\ii t$.  If $y=\ii t$, then
\[
 z_2=-z_1,\qquad z_4=-z_3,
 \qquad z_3=\ii z_1,
 \qquad |z_1|=1;
\]
the other sign gives the same square with the opposite orientation.  Hence the four points are the vertices of a unit square centered at the origin.  Conversely such a square has product $4^4$.
\end{proof}

\section*{Acknowledgements}
The author acknowledges the use of GPT-6 Astra to assist with
brainstorming, mathematical development, and manuscript drafting.
The author is solely responsible for the final content, analysis, and
conclusions.

\section*{Funding}
The author was supported by the National Key R\&D Program of China (No.~2023YFA1009401) and the National Natural Science Foundation of China (Nos.~12201618 and 12171324).

\bibliographystyle{amsplain}
\bibliography{references}

\end{document}